 \newtheorem{thm}{Theorem}[section]
 \newtheorem{cor}[thm]{Corollary}
 \newtheorem{lem}[thm]{Lemma}
 \theoremstyle{definition}
 \newtheorem{defn}[thm]{Definition}
 \newtheorem{nta}[thm]{Notation}
\newcommand{\ord}{{\mathrm{o}}}
\newcommand{\pg}{{\mathcal{P}}}
\newcommand{\dpg}[1]{\overrightarrow{\pg}(#1)}
\newcommand{\des}[1]{\overrightarrow{E}(#1)}
\newcommand{\bes}[1]{\overleftrightarrow{E}(#1)}
\newcommand{\cyc}{\text{C}}
\begin{document}

\title
{A group sum inequality and its application to power graphs}

\author{Brian Curtin}
\author{G.~R.~Pourgholi}

\address{{\bf Brian Curtin}: Department of Mathematics and Statistics, University of South Florida, Tampa FL, 33620}
\email{bcurtin@usf.edu}

\address{{\bf G. R. Pourgholi}: School of Mathematics, Statistics and Computer Science,
University of Tehran, Tehran 14155-6455, I.~R.~Iran}

\email{pourgholi@ut.ac.ir}

\begin{abstract}
Let $G$ be a finite group of order $n$, and let $\cyc_n$ be the cyclic group of order $n$.  We show that 
$\sum_{g \in \cyc_n} \phi(\ord(g))\geq \sum_{g \in G} \phi(\ord(g))$,
with equality if and only if $G$ is isomorphic to $\cyc_n$.
As an application, we show that among all finite groups of a given order, the cyclic group of that order has the maximum number of undirected
edges in its directed power graph.

\vskip 3mm
\noindent{MSC 2010:} 05C25, 20F99\\
\noindent{Keywords:} Cyclic groups, Euler totient, Sylow subgroups.
\end{abstract}

\maketitle


\section{Introduction}

Our main result is a  group theoretic inequality, which we apply to power graphs.

\begin{defn}
\label{def:phi(G)}
Let $G$ be a finite group. 
For  $g\in G$,  let $\ord(g)$ denote the order of $g$. 
Let  $\phi$ denote the Euler totient function.
Define
\begin{equation}
\label{eq:newsumgroup}
    \phi(G) = \sum_{g \in G} \phi(\ord(g)).
\end{equation}
\end{defn}

\begin{thm}[Main Theorem]
\label{thm:main-restated}
Let $G$ be a finite group of order $n$, and let $\cyc_n$ be the cyclic group of order $n$. Then 
\begin{eqnarray}
\phi(\cyc_n)     & \geq  & \phi(G),       \label{eq:phiZ>phiG}
\end{eqnarray}
with equality if and only if $G$ is isomorphic to $\cyc_n$.
\end{thm}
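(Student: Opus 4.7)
My plan is to separate the argument into the prime-power case and a Sylow-based reduction. The key rewriting is $\phi(G) = \sum_{d \mid n} c_d(G)\phi(d)^2$, where $c_d(G)$ counts cyclic subgroups of $G$ of order $d$ (since a cyclic subgroup $H$ of order $d$ contributes $\phi(d)^2$ ordered pairs $(g,h)$ with $\langle g\rangle=\langle h\rangle=H$); the case $G = \cyc_n$ corresponds to $c_d\equiv 1$. Two constraints will be used: the identity $\sum_d c_d(G)\phi(d) = n$, from partitioning $G$ by the cyclic subgroup generated, and Frobenius's theorem, which gives $\sum_{e\mid d} c_e(G)\phi(e) = N_d(G) \geq d$ for every $d\mid n$, with equality throughout iff $G$ is cyclic.

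For the $p$-group case ($|P| = p^a$), the divisors form the chain $1 \mid p \mid \cdots \mid p^a$, and $\phi(p^k)$ is monotone along it. An Abel-type partial-summation combined with the Frobenius partial-sum bounds $\sum_{j\leq k} c_{p^j}(P)\phi(p^j) \geq p^k$ and the vanishing $c_{p^a}(P) = 0$ for non-cyclic $P$ then shows that $\sum_k c_{p^k}(P)\phi(p^k)^2 \leq \sum_k \phi(p^k)^2 = \phi(\cyc_{p^a})$, with equality iff $P$ is cyclic.

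For general $G$, each $g$ has a unique commuting decomposition $g = \prod_p g_p$ into its $p$-parts, so $\phi(\ord(g)) = \prod_p \phi(\ord(g_p))$ by multiplicativity of $\phi$ on coprime arguments. The main step is to show $\phi(G) \leq \prod_p \phi(P_p)$ where $P_p$ is a Sylow $p$-subgroup; chained with the $p$-group bound and the multiplicativity $\phi(\cyc_m)\phi(\cyc_{m'}) = \phi(\cyc_{mm'})$ for $\gcd(m,m')=1$, this yields
\[
\phi(G) \;\leq\; \prod_p \phi(P_p) \;\leq\; \prod_p \phi(\cyc_{p^{a_p}}) \;=\; \phi(\cyc_n).
\]
Equality throughout forces $G$ to be nilpotent (first step) with every Sylow cyclic (second step), hence $G \cong \cyc_n$.

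The principal obstacle is the Sylow reduction: dropping the commuting constraint on the tuples $(g_p)_p$ is much too lossy, as the set of $p$-elements of $G$ can far outnumber $|P_p|$. A cleaner inductive route proceeds via a normal Hall $\pi$-subgroup $P$ with complement $H$, furnished by Schur--Zassenhaus since $\gcd(|P|,|H|)=1$: the bijection $g = ph \leftrightarrow (p,h) \in P \times H$, together with the divisibility $\ord(ph) \mid \ord(p)\cdot \ord(h)$ (verified directly, e.g.\ via the identity $(ph)^{\ord(p)\ord(h)} = Q^{\ord(p)}$ with $Q \in P$), gives $\phi(\ord(ph)) \leq \phi(\ord(p))\phi(\ord(h))$ term by term, hence $\phi(G) \leq \phi(P)\phi(H)$; iteration settles the solvable case. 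The non-solvable case will likely need an additional ingredient, such as an induction on composition length or a careful analysis of how cyclic subgroups of $G$ distribute among its Sylow subgroups.
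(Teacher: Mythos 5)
Your $p$-group argument is sound and genuinely different from the paper's treatment of prime-power order (the paper uses an elementary inequality showing that $n<Q\phi(\ord(g))$ forces $\langle g\rangle=G$, rather than Frobenius's theorem plus Abel summation), and your final assembly $\phi(G)\le\phi(P)\phi(H)$ for a normal abelian coprime subgroup with complement is essentially the paper's Lemma \ref{eq:prodphidirect} and Corollary \ref{cor:sdp-div-dp}. But there is a genuine gap exactly where you flag ``the principal obstacle'': you never produce the normal subgroup your induction needs. A nontrivial normal Hall $\pi$-subgroup need not exist even for solvable groups --- $S_4$ has none, since its only proper nontrivial normal subgroups are $V_4$ and $A_4$ and neither has order coprime to its index --- so ``iteration settles the solvable case'' does not get off the ground, and you explicitly leave the non-solvable case open. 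Schur--Zassenhaus only splits off a complement to a normal coprime subgroup you already have; it does not furnish one.

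Supplying that normal subgroup is precisely the paper's main technical contribution (Theorem \ref{thm:overall}). Assuming $\phi(G)\ge\phi(\cyc_n)$, an averaging argument yields $g\in G$ with $\phi(\ord(g))>n/Q$, where $Q=\prod_h (p_h+1)/(p_h-1)$; number-theoretic estimates on $Q$, Burnside's transfer theorem, and a finite case analysis then show $G$ has a normal \emph{cyclic} Sylow $p$-subgroup for the largest prime $p$ dividing $n$. Schur--Zassenhaus splits off a complement $T$, induction on the number of distinct prime divisors applies to $T$, and a separate lemma handles the residual case $\cyc_a\rtimes_\varphi\cyc_b$. Without some substitute for this existence step your reduction from general $G$ to its Sylow subgroups is not established; note also that your term-by-term bound $\ord(ph)\mid\ord(p)\ord(h)$ quietly uses that $P$ is abelian (the product of conjugates of $p$ must land in a subgroup of exponent dividing $\ord(p)$), and that the first inequality you propose, $\phi(G)\le\prod_p\phi(P_p)$, is indeed not obtainable by dropping the commuting constraint and is never proved in that form in the paper.
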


Our motivation for (\ref{eq:phiZ>phiG})  lies in our interest in power graphs of finite groups.  

\begin{defn}
\label{def:dpg-ue}
The {\em directed power graph} $\dpg{G}$ of a 
group $G$ has vertex set $G$ and directed edge set
  $\des{G} = \{(g,h)\,|\, g, h\in G,\,  h\in\langle g
  \rangle\backslash\{g\}\}$.
The set of {\em undirected edges}  of 
$\dpg{G}$ is   $\bes{G}
           =\{\{g,h\}\,|\, (g,h), (h,g)\in \des{G} \}$.   
 \end{defn}

Power graphs are among the various graphs related to algebraic structures.  They were introduced in \cite{KQ1, k, KQ2, KQS} in connection with groups and semigroups.  For more information about power graphs, the reader is referred to the survey \cite{AbKeCh}, which contains a full review of the literature to  date. 
 From Definition \ref{def:dpg-ue}, we immediately get the following.

\begin{lem}
In the directed power graph of a group, there is a pair of oppositely directed edges between two distinct group elements precisely when they generate the same subgroup.  
\end{lem}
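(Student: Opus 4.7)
The plan is to unwind the definitions directly; this lemma is essentially a tautology once one knows that subgroup containment between two cyclic subgroups corresponds to membership of one generator in the other.

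First I would fix distinct elements $g, h \in G$ and translate the statement. By Definition \ref{def:dpg-ue}, the pair of oppositely directed edges $(g,h), (h,g) \in \des{G}$ exists exactly when $h \in \langle g \rangle \setminus \{g\}$ and $g \in \langle h \rangle \setminus \{h\}$. Since $g \neq h$ is already given, the non-equality conditions are automatic, and the content reduces to $h \in \langle g\rangle$ and $g \in \langle h \rangle$.

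Next I would handle the two implications. For the forward direction, $h \in \langle g \rangle$ forces $\langle h \rangle \subseteq \langle g \rangle$, and $g \in \langle h \rangle$ forces $\langle g \rangle \subseteq \langle h \rangle$, so $\langle g \rangle = \langle h \rangle$. Conversely, if $\langle g \rangle = \langle h \rangle$, then $h$ lies in $\langle g \rangle$ and $g$ lies in $\langle h \rangle$; combined with $g \neq h$, this gives $h \in \langle g \rangle \setminus \{g\}$ and $g \in \langle h \rangle \setminus \{h\}$, hence both directed edges.

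There is no real obstacle here — the lemma is a bookkeeping observation that justifies the terminology of ``undirected edges'' used in Definition \ref{def:dpg-ue} and sets up the later counting argument, in which $\phi(\ord(g))$ will count generators of $\langle g \rangle$ and thus control $|\bes{G}|$.
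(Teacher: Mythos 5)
Your proof is correct and matches the paper's approach: the paper states this lemma as an immediate consequence of Definition \ref{def:dpg-ue} and gives no further argument, and your write-up simply spells out that direct unwinding (mutual membership in $\langle g\rangle$ and $\langle h\rangle$ is equivalent to $\langle g\rangle = \langle h\rangle$ once $g \neq h$ is assumed). Nothing is missing.
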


\begin{cor}
\label{cor:besGinphi}
With reference to Definition \ref{def:phi(G)}, 
$g\in G$ is a vertex in ($\phi(\ord(g))-1$)-many 
undirected edges of $\dpg{G}$.  In particular, 
\begin{equation}
\label{eq:biEcount}
  |\bes{G}|
             = \frac{1}{2} \sum_{g\in G}( \phi(\ord(g))-1 ) =\frac{\phi(G)-|G|}{2}.
\end{equation}
\end{cor}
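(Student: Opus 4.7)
The plan is to use the preceding lemma to identify the undirected-edge neighbors of a fixed vertex $g$ with the other generators of the cyclic subgroup $\langle g \rangle$, and then count.

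First I would fix $g \in G$ with $\ord(g) = d$, so $\langle g \rangle$ is cyclic of order $d$. By the lemma, a distinct element $h$ forms an undirected edge with $g$ exactly when $\langle h \rangle = \langle g \rangle$, i.e., when $h$ is a generator of $\langle g \rangle$. Since a cyclic group of order $d$ has exactly $\phi(d)$ generators, the number of such $h$ is $\phi(d) - 1 = \phi(\ord(g)) - 1$. This establishes the first claim.

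Next I would sum the vertex-degrees (in the undirected-edge graph) over all $g \in G$. By the handshake lemma, this sum equals $2|\bes{G}|$, yielding
\begin{equation*}
  |\bes{G}| = \frac{1}{2}\sum_{g\in G}\bigl(\phi(\ord(g))-1\bigr) = \frac{1}{2}\Bigl(\sum_{g\in G}\phi(\ord(g)) - |G|\Bigr) = \frac{\phi(G)-|G|}{2},
\end{equation*}
using the definition of $\phi(G)$ from Definition~\ref{def:phi(G)}.

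There is no real obstacle here: the argument is an immediate application of the preceding lemma together with the fact that a cyclic group of order $d$ has $\phi(d)$ generators. The only thing to be slightly careful about is the subtraction of $1$, which accounts for excluding $g$ itself from the set of generators of $\langle g \rangle$ when counting neighbors; this is exactly what produces the $-|G|$ term after summation.
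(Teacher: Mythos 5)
Your argument is correct and is exactly the one the paper intends: the preceding lemma identifies the undirected-edge neighbors of $g$ with the other generators of $\langle g\rangle$, of which there are $\phi(\ord(g))-1$, and the handshake lemma then gives (\ref{eq:biEcount}). The paper leaves this as an immediate consequence without writing it out, so your proposal simply supplies the omitted details along the same route.
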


It was shown in \cite{a} that among directed power graphs of groups of a given finite order, that  of the cyclic group has the maximum number of edges.  In  \cite{BCGRP:maxedge},  we showed that the same is true for undirected power graphs.    In light of Corollary \ref{cor:besGinphi},
Theorem \ref{thm:main-restated} is equivalent to the following related result.

\begin{thm}
\label{thm:main} 
Among all groups of a given finite order, the cyclic
group of that order has the maximum number of undirected edges
in its directed power graph. 
\end{thm}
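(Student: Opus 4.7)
The plan is to deduce Theorem \ref{thm:main} directly by combining the two preceding results. For any finite group $H$ of order $n$, Corollary \ref{cor:besGinphi} provides the counting identity
\begin{equation*}
|\bes{H}| = \frac{\phi(H) - n}{2}.
\end{equation*}
Since $n$ is held fixed once we restrict attention to groups of a given order, the map $H \mapsto |\bes{H}|$ is a strictly increasing affine function of $\phi(H)$. Consequently, the problem of maximizing the number of undirected edges of $\dpg{H}$ over all groups $H$ of order $n$ reduces to the problem of maximizing $\phi(H)$ over the same class.

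Next I would invoke the Main Theorem (Theorem \ref{thm:main-restated}), which asserts exactly that $\phi(\cyc_n) \geq \phi(G)$ for every group $G$ of order $n$, with equality if and only if $G \cong \cyc_n$. Substituting this into the displayed identity yields $|\bes{\cyc_n}| \geq |\bes{G}|$ for every group $G$ of order $n$, which is precisely Theorem \ref{thm:main}.

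There is no real obstacle at this stage; Theorem \ref{thm:main} is essentially a transcription of the Main Theorem into the language of power graphs, with Corollary \ref{cor:besGinphi} performing the translation. The genuine difficulty in the paper lies in proving Theorem \ref{thm:main-restated} itself, the group sum inequality announced in the title. I would expect that argument to proceed by isolating the contribution of each Sylow subgroup, since $\phi$ is multiplicative on coprime arguments and group orders factor over primes, and then reducing to a statement about $p$-groups in which a cyclic Sylow subgroup uniquely maximizes $\sum_{g} \phi(\ord(g))$. But for Theorem \ref{thm:main} as stated, once Theorem \ref{thm:main-restated} is in hand, the proof is immediate.
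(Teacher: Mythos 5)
Your proof is correct and follows exactly the paper's own route: the paper likewise derives Theorem \ref{thm:main} immediately from Theorem \ref{thm:main-restated} together with the edge count $|\bes{G}| = (\phi(G)-|G|)/2$ from Corollary \ref{cor:besGinphi}. Your observation that the translation is an increasing affine function of $\phi(H)$ for fixed $n$ is precisely the point the paper leaves implicit.
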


\section{A criterion for a normal cyclic Sylow subgroup}

We develop a criterion for the existence of a cyclic normal Sylow subgroup.

\begin{nta}
\label{nta:nfactored} Let $n$ be a positive integer. Write
    $n = p_1^{\alpha_1}p_2^{\alpha_2}\cdots p_k^{\alpha_k}$
for primes
     $p_1 < p_2 < \cdots < p_k$
and positive integers $\alpha_1$, $\alpha_2$, \ldots, $\alpha_k$.
Abbreviate $p=p_k$ and $\alpha=\alpha_k$.     
Let  
\begin{equation}
\label{eq:Qdef}
    Q = \prod_{h=1}^{k}\frac{p_h+1}{p_h-1}.
\end{equation}
\end{nta}

An elementary exercise in the same vein as \cite[p.~143, exercise 5]{burton:ENT}  gives two expressions  for $\phi(\cyc_n)$ derived from $n$ (see also \cite[Lemma 2.5]{BCGRP:maxedge}).

\begin{lem}
\label{lem:phiCn}
With Notation \ref{nta:nfactored}, let $\cyc_n$ be the cyclic group of order $n$. Then    
\begin{equation}
\label{eq:PhiZn}
    \phi(\cyc_n) = \sum_{d|n}\phi(d)^{2}  
       = \prod_{h=1}^{k}  \frac{p_h^{2\alpha_h}(p_h-1)+2}{p_h+1}.
 \end{equation}    
\end{lem}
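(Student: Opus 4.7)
The plan is to prove the two equalities separately, the first by grouping elements of $\cyc_n$ by their order, and the second by using multiplicativity to reduce to a prime-power calculation.

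For the first equality, I would use the standard fact that in a cyclic group $\cyc_n$, for each divisor $d$ of $n$ there are exactly $\phi(d)$ elements of order $d$. Reorganizing the defining sum for $\phi(\cyc_n)$ by order then gives
\begin{equation*}
\phi(\cyc_n) = \sum_{g \in \cyc_n} \phi(\ord(g)) = \sum_{d \mid n} \phi(d) \cdot \phi(d) = \sum_{d \mid n} \phi(d)^2,
\end{equation*}
which is the first displayed identity.

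For the second equality, I would observe that $f(n) := \sum_{d\mid n} \phi(d)^2$ is multiplicative, being the Dirichlet convolution of the multiplicative function $\phi^2$ with the constant function $1$. Hence it suffices to verify the formula one prime at a time, i.e., to show that
\begin{equation*}
f(p^\alpha) = 1 + \sum_{j=1}^{\alpha}(p^j - p^{j-1})^2 = \frac{p^{2\alpha}(p-1)+2}{p+1}
\end{equation*}
for each prime $p$ and each $\alpha \geq 1$. Factoring out $(p-1)^2$ from the middle sum leaves a geometric series in $p^2$, which sums to $(p^{2\alpha}-1)/(p^2-1)$; combining over a common denominator $p+1$ and simplifying the numerator $p+1 + (p-1)(p^{2\alpha}-1) = p^{2\alpha}(p-1) + 2$ yields the claimed prime-power formula. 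Taking the product over $h = 1, \ldots, k$ then completes the proof.

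The argument is essentially bookkeeping, so there is no real obstacle; the only thing to be careful about is the algebraic simplification of the prime-power sum, where one must correctly combine the constant term $1$ with the geometric sum over $(p-1)^2$.
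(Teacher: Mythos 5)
Your proof is correct, and it fills in exactly the ``elementary exercise'' that the paper leaves to the reader (the paper gives no proof of this lemma, only a citation): counting the $\phi(d)$ elements of each order $d$ in $\cyc_n$ gives the first equality, and multiplicativity of $d \mapsto \sum_{e \mid d}\phi(e)^2$ plus the geometric-series computation $1 + (p-1)^2\frac{p^{2\alpha}-1}{p^2-1} = \frac{p^{2\alpha}(p-1)+2}{p+1}$ gives the second. Your algebra checks out, so nothing is missing.
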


Subtracting the 2 from the numerator of each factor of (\ref{eq:PhiZn}) gives the lower bound
\begin{equation}
\label{eq:phiC>n^2/Q}
        \phi(\cyc_n)> n^2/ Q.
\end{equation}
We may write
\begin{equation}
\label{eq:sumphiineq-rhs}
  Q
    = \frac{1}{p_{1}-1}
             \left( \frac{p_{1}+1}{p_{2}-1}
                   \cdots \frac{p_{k-2}+1}{p_{k-1}-1}\frac{p_{k-1}+1}{p_{k}-1}  
                                                                                 \right) 
           (p_{k}+1). 
\end{equation}
Observe that if $(p_{h-1}, p_{h})\not= (2,3)$, then for $1\leq h \leq k$
\begin{equation}
\label{eq:pipi-1}
  \frac{p_{h-1}+1}{p_{h}-1}\leq 1.
\end{equation}
This immediately gives

\begin{lem}
With Notation \ref{nta:nfactored}, assume $n$ is odd.
Then 
\begin{equation}
\label{eq:Q.p+1/p1-1}
  Q    \leq  \frac{p+1}{p_{1}-1}. 
\end{equation}
\end{lem}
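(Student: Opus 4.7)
The plan is to combine the two preparatory observations that immediately precede the lemma. Equation (\ref{eq:sumphiineq-rhs}) already rewrites $Q$ in the telescoped form
\[
  Q = \frac{1}{p_{1}-1}
       \left(\prod_{h=2}^{k} \frac{p_{h-1}+1}{p_{h}-1}\right)(p_{k}+1),
\]
isolating the desired factors $1/(p_1-1)$ and $p_k+1 = p+1$ at the two ends, with a product of ratios of the ``shifted'' form $\frac{p_{h-1}+1}{p_h-1}$ in the middle. So it suffices to show that this middle product is at most $1$.

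The key input is inequality (\ref{eq:pipi-1}), which bounds each such ratio by $1$, with the sole exception of consecutive primes $(p_{h-1},p_h)=(2,3)$. Here is where the hypothesis that $n$ is odd enters: since $2\nmid n$, the prime $2$ does not appear in the factorization, so $p_1\geq 3$ and hence no consecutive pair $(p_{h-1},p_h)$ in the list can equal $(2,3)$. Therefore (\ref{eq:pipi-1}) applies to every factor in the middle product.

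Multiplying these inequalities together gives $\prod_{h=2}^{k}\frac{p_{h-1}+1}{p_h-1}\leq 1$, and substituting into the displayed expression for $Q$ yields $Q \leq \frac{p+1}{p_1-1}$, as desired.

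There is essentially no obstacle here: the lemma is a one-line consequence of already-stated facts, and the only subtle point is noting that oddness of $n$ is exactly what rules out the exceptional pair $(2,3)$ in (\ref{eq:pipi-1}). I would just make sure to flag that observation explicitly.
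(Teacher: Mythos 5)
Your proof is correct and is exactly the argument the paper intends: the lemma is stated as an immediate consequence of the factorization (\ref{eq:sumphiineq-rhs}) together with the bound (\ref{eq:pipi-1}), with oddness of $n$ ensuring $p_1\geq 3$ so that the exceptional pair $(2,3)$ never occurs. No differences worth noting.
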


In Table \ref{tab:specialQ} we record data concerning some  sets of primes which require special treatment.  Let $\pi(i)$ denote the $i^{\mathrm{th}}$ prime number.  For each positive integer $\ell$, let 
   $\mathcal{F}_\ell =\{ \pi(i)\, |\,  1\leq i \leq \ell\}$ and
   $\mathcal{S}_\ell=\{ \pi(i)\, |\,  1\leq i \leq \ell-1\}\cup\{\pi(\ell+1)\}$.  
Write $Q(\mathcal{X})$ to denote the value of $Q$ when the set of distinct prime factors of $n$ is   $\mathcal{X}$.  
 \begin{table}[h]
\[\begin{array}{|l|c|c|c|c|c|c|c|c|c|}
\hline
\ell &1&2&3&4&5&6& 7& 8&9\\
\hline
\pi(\ell)& 2& 3& 5& 7& 11& 13& 17& 19&23\\
\hline
Q(\mathcal{F}_\ell) & 3& 6& 9& 12& 72/5& 84/5& 189/10& 21&252/11\\
\hline
Q(\mathcal{S}_\ell) &  2 & 9/2 & 8 & 54/5& 14 & 81/5 & 56/3 & 1134/55 & *\\
\hline
\end{array}\]
 \caption{Some special values of $Q$}
\label{tab:specialQ}
\end{table}

\begin{lem}
\label{lem:Q<=p}
With Notation \ref{nta:nfactored},   the following hold.
\begin{enumerate}
\item \label{lem:numbertheoryresult}
Suppose that either $k\geq 9$ or  
     $\{ p_i\,|\, 1\leq i \leq k\} \not =\mathcal{F}_k$.  
Then $Q  \leq {p+1}$.
\item \label{lem:oddnumbers} 
      Suppose $n$ is odd.  Then $Q < p$.
\end{enumerate}
\end{lem}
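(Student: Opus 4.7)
The plan is to dispose of part (ii) immediately from the earlier inequality (\ref{eq:Q.p+1/p1-1}), then prove part (i) by a case analysis driven by the observation that $(p_{h-1}, p_h) = (2,3)$ is the only consecutive-prime pair for which the intermediate factor $\frac{p_{h-1}+1}{p_h-1}$ in (\ref{eq:sumphiineq-rhs}) exceeds $1$.

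For part (ii), $n$ odd gives $p_1 \geq 3$, so (\ref{eq:Q.p+1/p1-1}) yields $Q \leq (p+1)/(p_1-1) \leq (p+1)/2$, and $(p+1)/2 < p$ since $p \geq 3$.

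For part (i), I would decompose $Q$ using (\ref{eq:sumphiineq-rhs}) and split on the small primes. If $(p_1, p_2) \neq (2,3)$ (including the case $k=1$ and $p_1 = 2$ with $p_2 \geq 5$), every intermediate factor is at most $1$ by (\ref{eq:pipi-1}), so $Q \leq (p+1)/(p_1 - 1) \leq p+1$ since $p_1 \geq 2$. If $(p_1,p_2) = (2,3)$, then $p_1 - 1 = 1$ and (\ref{eq:sumphiineq-rhs}) becomes $Q = (p+1)\prod_{h=2}^k \frac{p_{h-1}+1}{p_h-1}$, so it suffices to show that product $P$ is at most $1$. I would split this again. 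If $\{p_i\} = \mathcal{F}_k$ with $k \geq 9$, I verify the base case $Q(\mathcal{F}_9) = 252/11 < 24 = \pi(9)+1$ from Table \ref{tab:specialQ}, and induct for $k \geq 10$ using $Q(\mathcal{F}_k) = Q(\mathcal{F}_{k-1})\cdot \frac{\pi(k)+1}{\pi(k)-1}$; the induction step reduces to $\pi(k) \geq \pi(k-1)+2$, which holds for all $k \geq 3$ since consecutive odd primes differ by at least $2$. If instead $\{p_i\} \neq \mathcal{F}_k$, let $j \geq 3$ be the least index with $p_j \neq \pi(j)$; then $p_j \geq \pi(j+1)$, the factors for $h < j$ coincide with those of $\mathcal{F}_{j-1}$, the factor at $h=j$ is at most $\frac{\pi(j-1)+1}{\pi(j+1)-1}$, and the remaining factors are $\leq 1$. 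For $j \in \{3,4,\ldots,9\}$ I check $P \leq 1$ directly: these worst cases correspond exactly to the entries $\mathcal{S}_\ell$ of Table \ref{tab:specialQ}, each of which satisfies $Q(\mathcal{S}_\ell) \leq \pi(\ell+1)+1$. For $j \geq 10$ the preceding $\mathcal{F}_{j-1}$ argument gives $\prod_{h=2}^{j-1} \frac{\pi(h-1)+1}{\pi(h)-1} = Q(\mathcal{F}_{j-1})/(\pi(j-1)+1) \leq 1$, and the remaining factors are $\leq 1$.

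The main obstacle is the finite enumeration for $j \in \{3,\ldots,9\}$: although each subcase is routine rational arithmetic and can be read off from the $\mathcal{S}_\ell$ row of Table \ref{tab:specialQ}, several cases achieve $P = 1$ exactly (for instance $j = 3$ with $p_3 = 7$, and $j = 5$ with $p_5 = 13$), so the bound is tight and the bookkeeping must be done carefully; this tightness also explains why the hypothesis of (i) can only be weakened to allow $k=1$ among the excluded $\mathcal{F}_k$.
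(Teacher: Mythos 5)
Your proof is correct and follows essentially the same route as the paper: part (ii) is verbatim the paper's argument from (\ref{eq:Q.p+1/p1-1}), and part (i) rests, as in the paper, on verifying the critical cases $\mathcal{F}_9$ and $\mathcal{S}_\ell$ against Table \ref{tab:specialQ} and then arguing that appending further primes or enlarging a prime only decreases the relevant product. Your version is in fact more carefully executed than the paper's rather terse sketch; the only loose end is the case $j=9$, where $Q(\mathcal{S}_9)$ is starred in the table and must be computed by hand (it equals $315/14 < 30$, so the bound holds).
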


\begin{proof}
(i): The excluded sets of prime factors are those in Table \ref{tab:specialQ} with $\ell <9$.  The inequality fails for the first 8 values of $\mathcal{F}_\ell$ but holds  for the $9^{\mathrm{th}}$. From Table \ref{tab:specialQ} we also see that the inequality holds when the set of prime factors of $n$ is $\mathcal{S}_k$ for $1\leq k \leq 8$. 
Referring to (\ref{eq:sumphiineq-rhs}), Equation (\ref{eq:pipi-1}) gives that 
the sequence  $(p_{i-1}+1)/(p_i-1)$ is nondecreasing (except when $(p_{1},p_{2})\not=(2,3)$), so once the inequality is satisfied by an initial subset of prime factors it is satisfied thereafter. Moreover, replacing a prime with a larger prime also preserves the inequality.  The result follows.

(ii):  By (\ref{eq:Q.p+1/p1-1}), and since $p_1$, $p\geq 3$, we have
   $Q\leq {(p + 1)}/{(p_{1} - 1)}  \leq  {(p + 1)}/{2}< p$,
as required.
\end{proof}

It is well-known that
\begin{equation}
\label{eq:phi(n)primes} 
\phi(n) =
   p_1^{\alpha_1-1}(p_1-1)p_2^{\alpha_2-1}(p_2-1)
                                     \cdots p_k^{\alpha_k-1}(p_k-1).
\end{equation}
Immediate consequences include the following:
\begin{eqnarray}
\label{eq:n=phinx}
   n &=& \phi(n) \cdot \frac{p_1}{p_1-1} \cdot  \frac{p_2}{p_2-1}
                 \cdots  \frac{p_k}{p_k-1},\\
\label{eq:a|b=>phia|phib}    
  a|b &\Rightarrow & \phi(a) | \phi(b).               
\end{eqnarray}

\begin{lem}
\label{lem:n/Qgeqph(n/p)p}
With Notation \ref{nta:nfactored},
suppose that $n \neq 2^{\alpha}$ for any $\alpha \geq 0$. 
Then
\begin{equation}
\label{eq:n=phinx11} 
    n \geq   Q\phi(\frac{n}{p^\alpha})
p^{\alpha-1},
\end{equation}
with equality if and only if $n = 2^{\alpha}3^{\beta}$ and
$\alpha$, $\beta > 0$.
\end{lem}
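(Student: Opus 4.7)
The plan is to reduce the claim to a clean inequality among the distinct prime divisors of $n$, and then dispatch it by a short induction on $k$. Using $\phi(n)=\phi(n/p^\alpha)\phi(p^\alpha)=\phi(n/p^\alpha)\,p^{\alpha-1}(p-1)$ from (\ref{eq:phi(n)primes}), the desired inequality $n\ge Q\phi(n/p^\alpha)p^{\alpha-1}$ is equivalent to $(p-1)n/\phi(n)\ge Q$. Applying (\ref{eq:n=phinx}) to write $n/\phi(n)=\prod_{h=1}^k p_h/(p_h-1)$ and cancelling the factor $p_k/(p_k-1)=p/(p-1)$ against $(p-1)$ turns this into
\[
 p\prod_{h=1}^{k-1}\frac{p_h}{p_h-1} \ \ge\ \prod_{h=1}^{k}\frac{p_h+1}{p_h-1}.
\]
Clearing the common denominator $\prod_{h=1}^{k-1}(p_h-1)$ from both sides and rearranging, the whole problem reduces to proving
\[
 \prod_{h=1}^{k-1}\frac{p_h+1}{p_h} \ \le\ \frac{p(p-1)}{p+1},
\]
with equality only in the claimed case.

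If $n$ is odd, then $p_1\ge 3$ and Lemma \ref{lem:Q<=p}(ii) gives $Q<p$; meanwhile the left side of the intermediate form above is at least $p$ (the missing factors $p_h/(p_h-1)$ are all $\ge 1$). So the inequality is strict and no equality can arise.

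For the remaining case $p_1=2$, the hypothesis $n\ne 2^\alpha$ forces $k\ge 2$, and I induct on $k$ from the base $k=2$. When $k=2$ the reduced inequality becomes $3/2\le p_2(p_2-1)/(p_2+1)$, which clears to $(2p_2+1)(p_2-3)\ge 0$; this holds for every prime $p_2\ge 3$ with equality precisely at $p_2=3$, matching $n=2^\alpha 3^\beta$. For the inductive step, multiplying the hypothesis $\prod_{h=1}^{k-1}(p_h+1)/p_h\le p_k(p_k-1)/(p_k+1)$ by $(p_k+1)/p_k$ yields $\prod_{h=1}^{k}(p_h+1)/p_h\le p_k-1$, so it suffices to verify $p_k-1<p_{k+1}(p_{k+1}-1)/(p_{k+1}+1)$; clearing denominators reduces this to $p_{k+1}(p_{k+1}-p_k)>p_k-1$, which is obvious since $p_{k+1}>p_k$ makes the left side at least $p_{k+1}>p_k-1$. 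Because every inductive step is strict, equality in the reduced (hence the original) inequality propagates from the base case alone, giving exactly $n=2^\alpha 3^\beta$ with $\alpha,\beta>0$.

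The main obstacle is the bookkeeping in the reduction: one has to see how $Q$, $\phi(n/p^\alpha)\,p^{\alpha-1}$, and $n$ telescope against each other so that only the ``boundary'' primes $p_1$ and $p=p_k$ drive the final estimate. After that simplification, the odd case is immediate from Lemma \ref{lem:Q<=p}(ii), and the even case is handled by an induction in which both the base step and the inductive step reduce to elementary quadratic inequalities in a single prime.
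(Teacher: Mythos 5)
Your proof is correct, but the way you close the argument differs from the paper's. Both proofs begin from the same reduction: using $\phi(n)=\phi(n/p^\alpha)p^{\alpha-1}(p-1)$ and $n/\phi(n)=\prod_h p_h/(p_h-1)$, the claim becomes a purely multiplicative inequality among the distinct primes dividing $n$. From there the paper finishes in one pass: it writes $n/Q$ as $\phi(n/p^\alpha)p^{\alpha-1}$ times the product $\frac{p_1(p-1)}{p+1}\prod_{h=1}^{k-1}\frac{p_{h+1}}{p_h+1}$ and observes that each factor $p_{h+1}/(p_h+1)$ is at least $1$ (the next prime is at least one more than the previous), with equality only for the pair $(2,3)$, and that $p_1(p-1)/(p+1)\ge 1$ with equality only for $p_1=2$, $p=3$; the equality condition $n=2^\alpha 3^\beta$ drops out immediately. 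You instead split into the odd case (dispatched by Lemma \ref{lem:Q<=p}(ii), since the left side of your intermediate form is at least $p>Q$) and the even case, which you handle by induction on $k$ with base $k=2$ reducing to $(2p_2+1)(p_2-3)\ge 0$ and a strict inductive step via $p_k-1<p_{k+1}(p_{k+1}-1)/(p_{k+1}+1)$. Your inductive step and base case are both verified correctly, and the strictness bookkeeping does isolate the equality case $k=2$, $p_2=3$ exactly as claimed. The paper's shifted-pairing of numerators against denominators is more economical and makes the equality analysis local to each factor; your version costs a case split and an induction but makes the strictness beyond $n=2^\alpha 3^\beta$ very explicit and reuses Lemma \ref{lem:Q<=p}(ii) rather than reproving the odd case from scratch.
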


\begin{proof}
If $n=p^\alpha$, then (\ref{eq:n=phinx11}) become 
   $p^\alpha\geq p^{\alpha-1}(p+1)/(p-1)$, 
which holds strictly since $p\neq 2$. The inequality fails if $n=2^\alpha$.   Now suppose that $n$ has at least two distinct prime factors. 
By (\ref{eq:Qdef}) and (\ref{eq:n=phinx}),
\[
\frac{n}{Q}
       = \phi(n)  p_{1}
      \frac{p_2}{(p_{1} + 1)}\frac{p_3}{(p_{2}+1)} \cdots
      \frac{p}{(p_{k-1}+1)}\frac{1}{(p+1)},
\]
By (\ref{eq:phi(n)primes}), $\phi(n) = \phi({n}/{p^{\alpha}})
p^{\alpha-1}(p-1)$, so
\[
\frac{n}{Q}
       = \phi(\frac{n}{p^{\alpha}})p^{\alpha - 1}(p - 1).\frac{p_{1}}{(p + 1)}
       \left(\frac{p_2}{(p_{1} + 1)}\frac{p_3}{(p_{2}+1)} \cdots
      \frac{p}{(p_{k-1}+1)}\right).
 \] 
 Observe that for $1 \leq h \leq k-1$, 
      ${p_{h+1}}/({p_h+1}) \geq 1$, 
with equality if and only if  $p_h = 2$ and $p_{h+1}=3$.
Thus 
      ${n}/{Q} \geq \phi({n}/{p^{\alpha}}) p^{\alpha-1}(p-1){p_1}/({p+1})$, 
with equality if and only if $k=2$, $p_1=2$ and $p=3$. 
Since $p_1\geq 2$ and $(p-1)/({p+1})\geq 1/2$,
     ${p_{1}(p-1)}/({p+1}) \geq 1$, 
with equality if and only if $p_1=2$ and $p=3$.  
Thus  (\ref{eq:n=phinx11}) holds with equality if and only if 
$n=2^\alpha 3^\beta$ with $\alpha$, $\beta> 0$.
\end{proof}

\begin{lem}
\label{lem:ineq-gnotid}
With Notation \ref{nta:nfactored}, let $G$
be a finite group of order $n$, 
and let $g\in G$.  
If $n < Q\phi(\ord(g))$, then $g$ is not the identity of $G$ except possibly when $n= 2$.
\end{lem}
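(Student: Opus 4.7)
The plan is to prove the contrapositive. If $g$ is the identity of $G$, then $\ord(g)=1$ and $\phi(\ord(g))=\phi(1)=1$, so the hypothesis $n < Q\phi(\ord(g))$ collapses to the purely arithmetic statement $n < Q$. The lemma is therefore equivalent to showing that $n \geq Q$ holds for every positive integer $n$ except $n=2$.

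To verify this, I would split into three cases. When $n=1$, the product defining $Q$ is empty and equals $1$, so $n \geq Q$ is trivial (and the hypothesis in fact cannot be met). When $n = 2^\alpha$ is a power of $2$, one has $k=1$, $p_1=2$, and $Q = (2+1)/(2-1) = 3$, so $n < Q$ occurs precisely when $\alpha = 1$, i.e., $n=2$. For all remaining $n$, Lemma \ref{lem:n/Qgeqph(n/p)p} applies and yields $n \geq Q\,\phi(n/p^\alpha)\,p^{\alpha-1}$; since both $\phi(n/p^\alpha) \geq 1$ and $p^{\alpha-1} \geq 1$, this forces $n \geq Q$, contradicting $n < Q$.

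There is no substantive obstacle here: Lemma \ref{lem:n/Qgeqph(n/p)p} carries out the real work, and this statement is essentially a reinterpretation in the group-theoretic language of element orders that will be convenient in proving the main theorem. The one point requiring care is that the prior lemma explicitly excludes the powers of $2$, so those values of $n$ must be handled by direct computation in order to isolate $n=2$ as the sole exception.
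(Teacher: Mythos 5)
Your proof is correct and follows essentially the same route as the paper: reduce to the arithmetic claim $n \geq Q$ when $g$ is the identity, invoke Lemma \ref{lem:n/Qgeqph(n/p)p} for $n$ not a power of $2$, and check powers of $2$ directly to isolate $n=2$. Your write-up is in fact slightly more explicit than the paper's about why the excluded powers of $2$ must be computed by hand.
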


\begin{proof}
Suppose $g$ is the identity of $G$, so $\phi(\ord(g))=1$.
Observe that if $n=1$, then $Q=1$ (an empty product) and 
     $\phi(\ord(g))=1$. 
In this case 
      $n = Q\phi(\ord(g))$, 
so the lemma does not apply.  Assume $n\geq 2$. 
Lemma \ref{lem:n/Qgeqph(n/p)p} and the hypothesis imply that $n$ is a positive power of $2$.  In this case, $Q\phi(\ord(g))=3$, which is less than $n$ unless $n=2$.  When $n=2$, $n < Q\phi(\ord(g))$, so the exception is required.  
\end{proof}

\begin{lem}
\label{lem:N<Qphi-primpow}
With Notation \ref{nta:nfactored}, let $G$
be a finite group of prime power order $n>2$, 
and let $g\in G$.  
If $n < Q\phi(\ord(g))$,
then $g$ generates $G$. 
\end{lem}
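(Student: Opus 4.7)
The plan is to exploit the simplicity of prime power orders: both $n$ and $\ord(g)$ are powers of $p$, so the inequality $n < Q\phi(\ord(g))$ collapses to a purely $p$-adic comparison.

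First I would reduce the hypothesis. Since $n=p^{\alpha}$, Notation \ref{nta:nfactored} gives $Q=(p+1)/(p-1)$. Also, $\ord(g)$ divides $n$, so $\ord(g)=p^{\beta}$ for some $0\leq\beta\leq\alpha$. By Lemma \ref{lem:ineq-gnotid} applied with our hypothesis, $g$ is not the identity (the exception $n=2$ is excluded since $n>2$), so $\beta\geq 1$. Hence $\phi(\ord(g))=p^{\beta-1}(p-1)$ by (\ref{eq:phi(n)primes}).

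Substituting these expressions into $n<Q\phi(\ord(g))$ gives
\[
    p^{\alpha} \;<\; \frac{p+1}{p-1}\cdot p^{\beta-1}(p-1) \;=\; (p+1)\,p^{\beta-1},
\]
equivalently $p^{\alpha-\beta+1}<p+1$. The left side is a nonnegative integer power of $p$, and the only such power strictly less than $p+1$ is at most $p$ itself. Hence $\alpha-\beta+1\leq 1$, i.e.\ $\beta\geq\alpha$, and combined with $\beta\leq\alpha$ we conclude $\beta=\alpha$. Therefore $\ord(g)=p^{\alpha}=n=|G|$, so $\langle g\rangle=G$ and $g$ generates $G$.

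There is no real obstacle here; the only subtlety is making sure the identity case is properly ruled out so that $\phi(\ord(g))$ has the closed form $p^{\beta-1}(p-1)$, which is exactly what Lemma \ref{lem:ineq-gnotid} provides under $n>2$. The $p$-adic step at the end is essentially a one-line observation that there are no powers of $p$ strictly between $p$ and $p+1$.
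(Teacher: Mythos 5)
Your proof is correct and follows essentially the same route as the paper's: compute $Q=(p+1)/(p-1)$, invoke Lemma \ref{lem:ineq-gnotid} to rule out the identity so that $\ord(g)=p^{\beta}$ with $\beta\geq 1$, and then reduce the hypothesis to $p^{\alpha-\beta+1}\leq p$ to force $\beta=\alpha$. No issues.
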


\begin{proof}
Say  $n=p^\alpha$.  Then
        $Q=(p+1)/(p-1)$ by defintion, and
        $\ord(g)=p^\ell$ for some $\ell$ $(0< \ell\leq \alpha)$ by Lagrange's theorem and Lemma \ref{lem:ineq-gnotid}. 
Now           
     $\phi(\ord(g)) =p^{\ell-1}(p-1)$. 
Thus $Q\phi(\ord(g)) = p^{\ell-1}(p+1)$.    Now
      $p^\alpha=n < Q\phi(\ord(g))=p^{\ell-1}(p+1)$.  
Thus  
         $p^{\alpha-\ell+1}\leq p$,
so
          $\ell\geq \alpha$.
In addition       
         $\ell\leq \alpha$, 
so $\ell=\alpha$. Hence $g$ generates $G$.  
\end{proof}

\begin{lem}
\label{lem:generalres} 
With Notation \ref{nta:nfactored}, let $G$
be a finite group of order $n>2$, and let $g\in G$.  
If 
          $n < Q\phi(\ord(g))$,
then
          $p^{\alpha}|\ord(g)$. 
\end{lem}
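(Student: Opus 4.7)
The plan is to argue by contrapositive: I assume $p^{\alpha} \nmid \ord(g)$ and show $Q\phi(\ord(g)) \leq n$, which contradicts the hypothesis. Since $\ord(g) \mid n$ by Lagrange's theorem, the assumption forces the $p$-part of $\ord(g)$ to be at most $p^{\alpha - 1}$; equivalently, $\ord(g) \mid n/p$. Then (\ref{eq:a|b=>phia|phib}) gives $\phi(\ord(g)) \leq \phi(n/p)$, so it suffices to establish $Q\phi(n/p) \leq n$.

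The case $n = 2^{\alpha}$, which requires $\alpha \geq 2$ since $n > 2$, is excluded from Lemma \ref{lem:n/Qgeqph(n/p)p} and so I would dispatch it directly: here $p = 2$ and $Q = 3$, so $Q\phi(n/p) = 3\phi(2^{\alpha - 1}) = 3 \cdot 2^{\alpha - 2} = 3n/4 < n$.

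For $n \neq 2^{\alpha}$, Lemma \ref{lem:n/Qgeqph(n/p)p} supplies $n \geq Q\phi(n/p^{\alpha})\, p^{\alpha - 1}$, so it is enough to show $\phi(n/p) \leq \phi(n/p^{\alpha})\, p^{\alpha - 1}$. If $\alpha = 1$ this is an equality. If $\alpha \geq 2$, applying multiplicativity of $\phi$ to the coprime factorization $n/p = (n/p^{\alpha}) \cdot p^{\alpha - 1}$ yields $\phi(n/p) = \phi(n/p^{\alpha})\, p^{\alpha - 2}(p-1) < \phi(n/p^{\alpha})\, p^{\alpha - 1}$. Chaining the inequalities then produces $Q\phi(\ord(g)) \leq Q\phi(n/p) \leq Q\phi(n/p^{\alpha})\, p^{\alpha - 1} \leq n$, the desired contradiction.

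The main obstacle is really just the case analysis: the $n = 2^{\alpha}$ edge case lies outside the scope of Lemma \ref{lem:n/Qgeqph(n/p)p}, and converting between $\phi(n/p^{\alpha})\, p^{\alpha - 1}$ (which appears there) and $\phi(n/p)$ (which is what the divisibility argument naturally produces) requires the short multiplicativity computation above.
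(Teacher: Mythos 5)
Your proof is correct and follows essentially the same route as the paper's: both hinge on Lemma \ref{lem:n/Qgeqph(n/p)p}, the observation that $p^{\alpha}\nmid\ord(g)$ forces $\ord(g)\mid n/p$, and the split into $\alpha=1$ versus $\alpha\geq 2$ via (\ref{eq:a|b=>phia|phib}); you have merely recast the contradiction as a contrapositive. The only cosmetic difference is at the prime-power edge case, where the paper invokes Lemma \ref{lem:N<Qphi-primpow} for all prime powers while you dispatch $n=2^{\alpha}$ by direct computation and let the main argument absorb odd prime powers.
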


\begin{proof}
If $n$ has just one prime factor, then  $g$ generates $G$ by Lemma \ref{lem:N<Qphi-primpow}, and the result follows.
Assume that $n$ has at least two distinct prime factors.
By hypothesis and Lemma \ref{lem:n/Qgeqph(n/p)p},
\begin{equation}
\label{eq:phiog>phi n/pp}
         \phi(\ord(g))   > \phi(\frac{n}{p^\alpha})    p^{\alpha-1}.
\end{equation}
For the sake of contradiction, suppose that 
      $p^{\alpha}\nmid \ord(g)$, 
so  $\ord(g)|n/p$.  
We consider two cases.  
If $\alpha = 1$, then (\ref{eq:a|b=>phia|phib}) gives 
    $\phi(\ord(g))| \phi(n/p)$,
contradicting (\ref{eq:phiog>phi n/pp}).  
If  $\alpha \geq 2$, then  (\ref{eq:a|b=>phia|phib}) gives
   $\phi(\ord(g))| \phi(n/p^{\alpha}) p^{\alpha-2}(p-1)$. 
In this case
   $\phi(\ord(g)) \leq \phi(n/p^{\alpha}) p^{\alpha-2}(p-1)$,
contradicting (\ref{eq:phiog>phi n/pp}).
We conclude that $p^\alpha | \ord(g)$, as required.
\end{proof}

\begin{lem}
\label{lem:N<Qphi,even}
With Notation \ref{nta:nfactored}, let $G$
be a finite group of order $n$, and let $g\in G$.  
If 
          $\ord(g)$ is even and 
          $n < Q\phi(\ord(g))$,
then
          $n/\ord(g) < p$. 
\end{lem}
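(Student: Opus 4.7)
The plan is to reduce the hypothesis to a clean ratio inequality via the identity \eqref{eq:n=phinx}, convert that into an explicit upper bound on $e:=n/\ord(g)$, and compare with $p$ using Lemma~\ref{lem:Q<=p}. I first dispose of the degenerate cases. If $n\le 2$ or $n=p^{\alpha}$ is a prime power, then $\ord(g)$ even forces $p=2$ (otherwise no even-order element exists); for $n>2$ Lemma~\ref{lem:N<Qphi-primpow} gives $\ord(g)=n$, and for $n=2$ the conclusion is immediate, whence $n/\ord(g)=1<p$. So I assume $n$ has $k\ge 2$ distinct prime factors. Then $2\mid\ord(g)\mid n$ forces $p_1=2$, and $p=p_k$ is odd. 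Lemma~\ref{lem:generalres} gives $p^{\alpha}\mid\ord(g)$, so I write $\ord(g)=p^{\alpha}m$ with $m\mid n':=n/p^{\alpha}$; and since $p$ is odd while $\ord(g)$ is even, $2\mid m$. The goal is $e:=n'/m<p$.

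For the reduction I substitute $\phi(\ord(g))=p^{\alpha-1}(p-1)\phi(m)$ and $n=\phi(n')p^{\alpha-1}(p-1)\prod_{h=1}^{k}p_h/(p_h-1)$ (from \eqref{eq:n=phinx} together with $\phi(n)=\phi(n')p^{\alpha-1}(p-1)$) into $n<Q\phi(\ord(g))$. The factors $p^{\alpha-1}(p-1)$ cancel and what remains simplifies to
\[\frac{\phi(n')}{\phi(m)}\;<\;Q\prod_{h=1}^{k}\frac{p_h-1}{p_h}\;=\;\prod_{h=1}^{k}\frac{p_h+1}{p_h}\;=:\;R.\]
Setting $S:=\{i:p_i\mid n',\ p_i\nmid m\}$, the relation $m\mid n'$ yields the identity $\phi(n')/\phi(m)=e\prod_{i\in S}(1-1/p_i)$; and $2\mid m$ means $1\notin S$, so $S\subseteq\{2,\ldots,k-1\}$.

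Each factor $p_i/(p_i-1)$ exceeds $1$, so enlarging $S$ only weakens the resulting bound on $e$. A short telescoping computation (using $(p_1+1)/(p_1-1)=3$) gives
\[e\;<\;R\prod_{i\in S}\frac{p_i}{p_i-1}\;\le\;R\prod_{i=2}^{k-1}\frac{p_i}{p_i-1}\;=\;\frac{(p+1)Q_0}{2p},\]
where $Q_0:=\prod_{h=1}^{k-1}(p_h+1)/(p_h-1)$. Thus $e<p$ reduces to the numerical inequality $Q_0\le 2p^2/(p+1)$.

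To finish I apply Lemma~\ref{lem:Q<=p}(i) to $n'$. Unless the set of prime factors of $n'$ equals $\mathcal{F}_{k-1}$ with $k-1\le 8$, the lemma gives $Q_0\le p_{k-1}+1\le p$, and $p\le 2p^2/(p+1)$ holds for any $p\ge 1$. In the remaining finite exceptional range $Q_0=Q(\mathcal{F}_{k-1})$ is read directly from Table~\ref{tab:specialQ}; since $p\ge\pi(k)>\pi(k-1)$ and $2p^2/(p+1)$ is increasing in $p$, it suffices to check the inequality at $p=\pi(k)$ for each $k-1\in\{1,\ldots,8\}$, a short numerical verification. The principal obstacle is isolating the clean form $\phi(n')/\phi(m)<R$ and identifying $(p+1)Q_0/(2p)$ as the right quantity to compare with $p$; after that the remaining steps are routine.
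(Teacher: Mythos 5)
Your proof is correct, but it takes a genuinely different and considerably longer route than the paper's. The paper's entire argument rests on one observation you never use: since $\ord(g)$ is even, $\phi(\ord(g))\leq \ord(g)/2$, so the hypothesis $n<Q\phi(\ord(g))$ immediately yields $n/\ord(g)<Q/2$; after disposing of the $2$-power cases via Lemma \ref{lem:N<Qphi-primpow}, the factorization \eqref{eq:sumphiineq-rhs} with $p_1=2$ and $p_2\geq 3$ gives $Q/2\leq 3(p+1)/(2(p_2-1))\leq 3(p+1)/4\leq p$, the last step using $p\geq 3$. That is the whole proof. You instead invoke Lemma \ref{lem:generalres} to extract $p^\alpha\mid\ord(g)$, cancel the $p$-part, reduce to the totient-ratio inequality $\phi(n')/\phi(m)<R$, and then compare $(p+1)Q_0/(2p)$ with $p$ via Lemma \ref{lem:Q<=p}(i) plus a finite verification against Table \ref{tab:specialQ}; I checked your telescoping identity, the reduction to $Q_0\leq 2p^2/(p+1)$, and the eight exceptional cases, and all are sound. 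Your bound $n/\ord(g)<(p+1)Q_0/(2p)$ is marginally sharper than the paper's $Q/2=(p+1)Q_0/(2(p-1))$, but this gains nothing here, and the price is heavy machinery (Lemma \ref{lem:generalres}, the case table) where a single elementary inequality on $\phi$ of an even number suffices. The lesson to take away is that the parity hypothesis is doing all the work in this lemma, and it can be cashed in at the very first step.
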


\begin{proof}
Observe that 
      $\ord(g)\geq 2\phi(\ord(g))$ and 
      $p_1=2$, 
so 
     $n/\ord(g)\leq n/2\phi(\ord(g))\leq Q/2$.
If $n=2$, the result trivial.     
If $n=2^\alpha$ for some $\alpha>0$, then  $Q=3$ by definition and  
$\ord(g)=n$ by Lemma \ref{lem:N<Qphi-primpow}, so the result follows.    
Assume $n$ has at least one prime factor other than $2$. Then by 
(\ref{eq:sumphiineq-rhs}),
     $Q/2\leq  3(p+1)/2(p_2-1)$.
Since $p_2\geq 3$, the right-hand side is at most $p$, and the result follows.   
\end{proof}

\begin{defn}
\label{def:pcomp}
Let $p$ be a prime. Let $G$ be a finite group, and let $P$ be a Sylow
$p$-subgroup of $G$. A {\em $p$-complement} in $G$ is a  subgroup with index equal to the order of $P$.
\end{defn}

\begin{thm} 
\cite[Theorem 10.21]{r}  (Burnside's transfer theorem)
\label{thm:transfer}
With the notation of Definition \ref{def:pcomp},
if $P\subseteq Z(N_G(P))$, then $G$ has a normal $p$-complement.
\end{thm}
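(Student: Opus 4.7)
My plan is to prove Burnside's transfer theorem by the classical transfer argument. First observe that the hypothesis $P\subseteq Z(N_G(P))$ forces $P$ to be abelian, since $P\subseteq N_G(P)$ and every element of $P$ therefore commutes with every other element of $P$. Because $P$ is abelian, I can define the transfer homomorphism $V\colon G \to P$ directly, with no need to pass to the abelianization $P/[P,P]$.

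The strategy is to compute the restriction $V|_P$ and show that it coincides with the $[G:P]$-th power map on $P$. Since $\gcd([G:P],|P|)=1$, that power map is an automorphism of $P$, so $V|_P$ is an isomorphism. It follows that $V$ is surjective, that $\ker V \cap P = \{e\}$, and that $|\ker V| = [G:P]$. Setting $K = \ker V$ then produces a normal subgroup of $G$ of index $|P|$, which is precisely a normal $p$-complement.

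To compute $V|_P$ I would use the standard coset-decomposition formula. Fix a transversal for $P$ in $G$ and let $\langle x \rangle$ act on the cosets by right translation. Picking a representative $t_i$ from each orbit, of size $n_i$ with $\sum_i n_i = [G:P]$, the transfer is given by $V(x) = \prod_i t_i^{-1} x^{n_i} t_i$, where each conjugate automatically lies in $P$. The crucial step is to show that each conjugate equals $x^{n_i}$. This is where I would invoke Burnside's fusion lemma: two elements of $P$ that are $G$-conjugate are already $N_G(P)$-conjugate. Since $P \subseteq Z(N_G(P))$, conjugation by elements of $N_G(P)$ fixes $P$ pointwise, so $t_i^{-1} x^{n_i} t_i = x^{n_i}$ and thus $V(x) = x^{[G:P]}$, as required.

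The main obstacle will be the careful setup of the transfer homomorphism together with the invocation of Burnside's fusion lemma; both are standard but nontrivial components of finite group theory, and getting the coset bookkeeping right is the only subtle computation. Once $V|_P$ has been identified with the $[G:P]$-th power map, the existence of the normal $p$-complement follows immediately by taking $K = \ker V$.
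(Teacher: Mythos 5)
Your outline is the standard and correct proof of Burnside's normal $p$-complement theorem, but note that the paper does not prove this statement at all: it is quoted as a known result, with a citation to Rose's \emph{A Course on Group Theory} (Theorem 10.21), and the proof you sketch is essentially the one found there and in most textbooks. Your chain of deductions is sound: $P\subseteq Z(N_G(P))$ forces $P$ abelian, so the transfer $V\colon G\to P$ is defined without passing to $P/[P,P]$; the cycle decomposition gives $V(x)=\prod_i t_i^{-1}x^{n_i}t_i$ with each factor in $P$; Burnside's fusion lemma (applicable because $P$ is abelian, so singletons are normal subsets of $P$) makes each factor $N_G(P)$-conjugate to $x^{n_i}$, hence equal to it by the centrality hypothesis; and since $\gcd([G:P],|P|)=1$, the resulting map $x\mapsto x^{[G:P]}$ is an automorphism of $P$, so $\ker V$ is a normal subgroup of index $|P|$, i.e.\ a normal $p$-complement in the sense of Definition \ref{def:pcomp}. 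The only caveat is that your argument delegates two nontrivial ingredients --- the well-definedness and multiplicativity of the transfer, and the fusion lemma itself --- to ``standard facts''; that is entirely reasonable for a classical theorem of this kind, but those are exactly the places where the real work lives, which is presumably why the authors chose to cite the result rather than reprove it.
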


\begin{thm}
\label{thm:overall} 
With Notation \ref{nta:nfactored}, let $G$ be a finite group of order $n$. 
Suppose that there exists an element $g\in G$ such that 
    $n<Q\phi(\ord(g))$. Then there is a
normal (and hence unique) Sylow $p$-subgroup of $G$.  Moreover, the Sylow $p$-subgroup is contained in $\langle g\rangle$ and hence is cyclic.
\end{thm}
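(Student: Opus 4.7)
The plan is to locate $P$ inside $\langle g\rangle$ via Lemma~\ref{lem:generalres} and then to prove its normality by Sylow counting, closing the one stubborn sub-case with Burnside's transfer theorem.

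First I would invoke Lemma~\ref{lem:generalres} to get $p^{\alpha}\mid\ord(g)$. The cyclic group $\langle g\rangle$ then contains a unique subgroup $P$ of order $p^{\alpha}$. Since $p^{\alpha}$ is the full $p$-part of $n$, $P$ is a Sylow $p$-subgroup of $G$, and it is cyclic as a subgroup of $\langle g\rangle$. This settles the ``contained in $\langle g\rangle$'' and ``cyclic'' halves of the conclusion, leaving normality.

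For normality I will aim to show that the number $n_p$ of Sylow $p$-subgroups of $G$ satisfies $n_p\leq p$, since Sylow's congruence $n_p\equiv 1\pmod p$ then forces $n_p=1$. Because $\langle g\rangle$ is abelian and contains $P$, we have $\langle g\rangle\subseteq C_G(P)\subseteq N_G(P)$, so $\ord(g)\mid|N_G(P)|$ and $n_p\leq n/\ord(g)$; hence it suffices to show $n/\ord(g)<p+1$. The hypothesis together with $p\mid\ord(g)$ (so $\phi(\ord(g))\leq(p-1)\ord(g)/p$) gives $n/\ord(g)<Q(p-1)/p$, which I would exploit in three generic sub-cases: (a) if $\ord(g)$ is even, Lemma~\ref{lem:N<Qphi,even} gives $n/\ord(g)<p$ directly; (b) if $n$ is odd, Lemma~\ref{lem:Q<=p}(ii) gives $Q<p$, so $n/\ord(g)<p$; (c) if either $k\geq 9$ or the prime set of $n$ is not $\mathcal{F}_k$, Lemma~\ref{lem:Q<=p}(i) gives $Q\leq p+1$, so $n/\ord(g)<(p^{2}-1)/p<p$.

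The remaining possibilities are $\ord(g)$ odd, $n$ even, and the prime set of $n$ equal to $\mathcal{F}_k$ for some $k\in\{2,\ldots,8\}$. For $k\in\{7,8\}$ the data in Table~\ref{tab:specialQ} give $Q(p-1)/p<p+1$ directly. For $k\in\{2,4,5,6\}$ I would enumerate the possible odd prime supports of $\ord(g)$; the requirement that $n/\ord(g)$ contain every prime of $n$ absent from $\ord(g)$, combined with $n/\ord(g)<Q\phi(\ord(g))/\ord(g)$, will force $n/\ord(g)<p+1$ in every permitted configuration. The one genuinely delicate sub-case is $k=3$ with $\ord(g)=5^{\alpha}$ and $n=6\cdot 5^{\alpha}$, where $n/\ord(g)=6=p+1$ is allowed. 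Assuming $n_p\geq 6$ there, $|N_G(P)|\leq n/6=|P|$, forcing $N_G(P)=P=C_G(P)$; then Burnside's transfer theorem (Theorem~\ref{thm:transfer}) produces a normal $5$-complement $K$ of order~$6$, and since $|\mathrm{Aut}(K)|\in\{2,6\}$ is coprime to $p=5$, the conjugation action of $P$ on $K$ is trivial, so $G=K\times P$ and $P$ is normal, contradicting $n_p\geq 6$.

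The hard part will be the bookkeeping for the exceptional $\mathcal{F}_k$ cases and setting up the Burnside application in the $k=3$ sub-case cleanly.
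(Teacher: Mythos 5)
Your proposal is correct and follows essentially the same route as the paper: locate a cyclic Sylow $p$-subgroup inside $\langle g\rangle$ via Lemma~\ref{lem:generalres}, bound the number of Sylow $p$-subgroups by $n/\ord(g)$ using $\langle g\rangle\subseteq N_G(P)$, invoke Lemmas~\ref{lem:Q<=p} and~\ref{lem:N<Qphi,even} to reduce to prime support $\mathcal{F}_k$ with $2\leq k\leq 8$ and $\ord(g)$ odd, enumerate those configurations, and close $n=2\cdot3\cdot5^{\alpha}$ with Burnside's transfer theorem. The only substantive divergence is in that last sub-case: the paper applies Theorem~\ref{thm:transfer} to the cyclic Sylow $2$- and then $3$-subgroups to build the normal Sylow $5$-subgroup from below, whereas you apply it to $P$ itself after forcing $N_G(P)=P$, obtaining a normal complement $K$ of order $6$ and concluding via $\gcd(|\mathrm{Aut}(K)|,5)=1$ --- both arguments are valid, and your slightly sharper bound $n/\ord(g)<Q(p-1)/p$ merely trims a few rows from the case analysis.
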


\begin{proof}
Note that if $n$ is a prime power,  then the result follows from Lemma \ref{lem:N<Qphi-primpow}, so ssume that $n$ is not a prime power.
First  suppose $n/\ord(g)< p+1$.  Then  
           $|G:\langle g\rangle|=  n/\ord(g) < p+1$.
By Lemma \ref{lem:generalres}, $p^{\alpha} | \ord(g)$, so         
$p\nmid |G:\langle g\rangle|$.  Thus $\langle g\rangle$ contains a Sylow $p$-subgroup $P$ of $G$ (which is necessarily cyclic since $\langle g\rangle$ is).   Clearly $\langle g\rangle\subseteq C_G(P)\subseteq N_G(P)$, so $|G:N_G(P)|< p+1$.  But $|G:N_G(P)|$ is the number of Sylow $p$-subgroups  and must be congruent to $1$ modulo $p$.  Thus, it must be the case that there is exactly one Sylow $p$-subgroup, which is necessarily normal. 

Now suppose $n/\ord(g)\geq p+1$.  Note that $n$ is not a power of 2, so Lemma \ref{lem:n/Qgeqph(n/p)p} gives
   $Q\leq n<Q\phi(\ord(g))$.
In particular, $\phi(\ord(g))>1$, so $\ord(g)>\phi(\ord(g))$.
Now  $n/\ord(g) \leq n/\phi(\ord(g)) < Q$. 
Thus by Lemmas \ref{lem:Q<=p} and \ref{lem:N<Qphi,even}, the following hold:
       $2\leq k\leq 8$,
       $n=\prod_{i=1}^{k}\pi(i)^{\alpha_i}$
       with $\alpha_i\not=0$ $(1\leq i \leq k)$,
       and $\ord(g)$ is odd.       
In Table \ref{tab:exceptionaln}, we show that other than $n=2\cdot 3\cdot 5^{\alpha}$, none of the remaining cases satisfy $n/\phi(\ord(g))<Q$, and thus are not subject to this theorem.  In this table, for $2\leq k \leq 8$ we mark with a bullet ($\bullet$) the even integers that are at least $\pi(k)+1$ and strictly less than $Q$ (from Table \ref{tab:specialQ}) as the possible values of  of $n/\ord(g)$
   Also by Lemma \ref{lem:generalres}, 
      $\pi(k)^{\alpha_k} | \ord(g)$, so 
      $\pi(k)\nmid  n/\ord(g)$. 
Since $\ord(g)$ is odd, $2^{\alpha_1} | n/\ord(g)$, where $\alpha_1$ is the largest power of 2 dividing $n/\ord(g)$.  It is now easy to read $\ord(g)$. The value of $\phi(\ord(g))$ will depend upon which primes appear in $\ord(g)$, but otherwise is straightforward to compute. All case other than
$n=2\cdot 3\cdot 5^{\alpha}$  violate  $n/\phi(\ord(g))<Q$.

Suppose $n=2\cdot 3\cdot 5^{\alpha}$.  Observe that $\ord(g)=5^\alpha$, so $\langle g \rangle$ is a cyclic Sylow $5$-subgroup. Note that the Sylow 2-subgroups are cyclic, so they are contained in the center of their normalizer.  Thus by Theorem \ref{thm:transfer}, there is a normal $2$-complement $H$  in $G$.  Now $H$ has order $3\cdot 5^\alpha$, its sylow $3$ subgroups are likewise cyclic, so there is a normal $3$-complement $P$ in $H$.  Now $P$ is a normal Sylow 5-subgroup of $H$, so it is characteristic in $H$, and hence normal in $G$.  Since $P$ is the unique Sylow 5-subgroup of $G$, we have 
$P=  \langle g \rangle$.  Thus the theorem holds in this case.
\end{proof}

\begin{table}
\[
\begin{array}{|rrrrr|}
\hline
k&\pi(k)&Q&&\\
  &\bullet\  \frac{n}{\ord(g)} & \alpha_1& \ord(g)&\\
  &  & \hbox{case} & \phi(\ord(g)) & \lfloor \frac{n}{\phi(\ord(g))}\rfloor \\
  \hline
2&3&6&&\\
  &\bullet\  \phantom{1}4 & 2& 3^{\alpha_1}&\\
  & & \hbox{all} & 2\cdot3^{\alpha_1-1} & 6= Q\\ 
\hline  
3&5&9&&\\
  &\bullet\  \phantom{1}6 & 1 & 3^{\alpha_2-1} 5^{\alpha_3}&\\
  & & \alpha_2=1 & 4\cdot 5^{\alpha_3-1}
            & \mathbf{7.4<Q}\\
  &  & \alpha_2>1 & 2\cdot 3^{\alpha_2-1}4\cdot 5^{\alpha_3-1} 
           & 11>Q\\
  &\bullet\  \phantom{1}8 & 3&3^{\alpha_2} 5^{\alpha_3}&\\
  & & \hbox{all} &2\cdot 3^{\alpha_2-1}4\cdot 5^{\alpha_3-1}
           & 15>Q\\
\hline  
4&7&12&&\\
  &\bullet\  \phantom{1}8 & 3&3^{\alpha_2} 5^{\alpha_3} 7^{\alpha_4}&\\
    & & \hbox{all} &2\cdot 3^{\alpha_2-1}4\cdot 5^{\alpha_3-1}
                             6\cdot 7^{\alpha_4}
                 & 17>Q\\
  &\bullet\  10& 1&3^{\alpha_2} 5^{\alpha_3-1} 7^{\alpha_4}&\\
    & & \alpha_3=1 & 2\cdot 3^{\alpha_2-1} 6\cdot 7^{\alpha_4}
                  & 14>Q\\
  &  & \alpha_3>1 & 2\cdot 3^{\alpha_2-1}4\cdot 5^{\alpha_3-2}
                             6\cdot 7^{\alpha_4}
                   &  21>Q\\
\hline  
5&11&14.4&&\\
  &\bullet\  12 & 2&3^{\alpha_2-1} 5^{\alpha_3} 
                                      7^{\alpha_4}11^{\alpha_5}&\\
  & & \alpha_2=1 & 4\cdot 5^{\alpha_3-1}6\cdot 
                              7^{\alpha_4}10\cdot 11^{\alpha_5-1}
                   & 19>Q\\
  &  & \alpha_2>1 & 2\cdot 3^{\alpha_2-1}4\cdot 5^{\alpha_3-1}
                               6\cdot 7^{\alpha_4}10\cdot 11^{\alpha_5-1}
                   &  28>Q\\ 
  &\bullet\  14 & 1&3^{\alpha_2} 5^{\alpha_3}
                                 7^{\alpha_4-1}11^{\alpha_5}&\\
    & & \alpha_4=1 & 2\cdot 3^{\alpha_2-1} 4\cdot 5^{\alpha_3-1}
                                   10\cdot 11^{\alpha_5-1}
                   & 28>Q\\ 
  &  & \alpha_4>1 & 2\cdot 3^{\alpha_2-1}4\cdot 5^{\alpha_3-2}
                             6\cdot 7^{\alpha_4}10\cdot 11^{\alpha_5-1}
                   & 33>Q\\    
\hline  
6&13&16.8&&\\
  &\bullet\  14 & 1&3^{\alpha_2} 5^{\alpha_3} 7^{\alpha_4-1}
                  11^{\alpha_5}13^{\alpha_6}&\\
  & & \alpha_4=1 & 2\cdot 3^{\alpha_2-1} 4\cdot 5^{\alpha_3-1}
                                   10\cdot 11^{\alpha_5-1}12\cdot13^{\alpha_6-1}
              &62>Q\\
  &  & \alpha_4>1 &  \left\{\begin{array}{l}
                          2\cdot 3^{\alpha_2-1}4\cdot 5^{\alpha_3-2}
                             6\cdot 7^{\alpha_4-2}\\
                  {} \times{}10\cdot 11^{\alpha_5-1}
                             12\cdot13^{\alpha_6-1}
                               \end{array}\right.
                & 36>Q\\  %
  &\bullet\  16 & 4&3^{\alpha_2} 5^{\alpha_3} 7^{\alpha_4}
                  11^{\alpha_5}13^{\alpha_6}&\\
  &&\hbox{all}&   \left\{\begin{array}{l}
                    2\cdot 3^{\alpha_2-1}4\cdot 5^{\alpha_3-2}
                             6\cdot 7^{\alpha_4-1}\\
                  {} \times{}10\cdot 11^{\alpha_5-1}
                             12\cdot13^{\alpha_6-1}  \end{array}\right.
                & 41>Q\\      
\hline  
7&17&18.9&&\\
  &\bullet\ 18 & 1&3^{\alpha_2-2} 5^{\alpha_3} 7^{\alpha_4}
                  11^{\alpha_5}13^{\alpha_6}17^{\alpha_7}&\\
    & & \alpha_2=2 & \left\{\begin{array}{l}
                   4\cdot 5^{\alpha_3-1}6\cdot 7^{\alpha_4-1}
                                   10\cdot 11^{\alpha_5-1}\\
                  {} \times{}12\cdot 13^{\alpha_6-1} 16\cdot 17^{\alpha_7-1}
                    \end{array}\right.
                   & 33>Q\\
  &  & \alpha_2>2 & \left\{\begin{array}{l}
                           2\cdot 3^{\alpha_2-1}4\cdot 5^{\alpha_3-2}
                             6\cdot 7^{\alpha_4-1}\\
                             {}\times 10\cdot 11^{\alpha_5-1}
                              12\cdot 13^{\alpha_6-1} 16\cdot 17^{\alpha_7-1}
                                \end{array}\right.
                   & 49>Q\\
\hline  
8&19& 21&&\\
  &\bullet\  20 & 2&3^{\alpha_2} 5^{\alpha_3-1} 7^{\alpha_4}
                  11^{\alpha_5}13^{\alpha_6}17^{\alpha_7}19^{\alpha_8}&\\
    & & \alpha_3=1 &\left\{\begin{array}{l}
                                    2\cdot 3^{\alpha_2-1}6\cdot 7^{\alpha_4-1}
                                   10\cdot 11^{\alpha_5-1} \\
                                   {}\times
                                   12\cdot 13^{\alpha_6-1}16\cdot 17^{\alpha_7-1}
                                    18\cdot 19^{\alpha_8-1}
                                \end{array}\right.
                   & 46>Q \\
  &  & \alpha_3>1 &\left\{\begin{array}{l}  
                             2\cdot 3^{\alpha_2-1}4\cdot 5^{\alpha_3-2}
                             6\cdot 7^{\alpha_4-1}10\cdot 11^{\alpha_5-1}\\ 
                              {}\times 12\cdot 13^{\alpha_6-1} 
                              16\cdot 17^{\alpha_7-1}  18\cdot 19^{\alpha_8-1}
                              \end{array}\right.
                   & 58>Q \\

\hline  
\end{array}
\]
\caption{Exceptional cases in the proof of Theorem \ref{thm:overall} }
\label{tab:exceptionaln}
\end{table}

The contrapositive form of Theorem \ref{thm:overall} is interesting.

\begin{cor}
With Notation \ref{nta:nfactored}, let
$G$ be a finite group of order $n$, and let $p$ be the largest prime divisor of $n$.
If there is more than one Sylow $p$-subgroup, then 
     $n\geq Q\phi(\ord(g))$
 for all $g\in G$.
\end{cor}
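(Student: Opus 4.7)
The plan is to obtain the corollary as the direct contrapositive of Theorem \ref{thm:overall}. First I would restate the two assertions in clean logical form. Theorem \ref{thm:overall} says: if there exists $g \in G$ with $n < Q\phi(\ord(g))$, then $G$ has a normal (and hence unique) Sylow $p$-subgroup. The corollary to be proved says: if $G$ has more than one Sylow $p$-subgroup, then for every $g \in G$ one has $n \geq Q\phi(\ord(g))$.

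Next I would check that these statements are literal contrapositives of one another. By Sylow's theorems the number of Sylow $p$-subgroups is always at least one, so ``more than one Sylow $p$-subgroup'' is exactly the negation of ``unique Sylow $p$-subgroup''; and uniqueness of a Sylow $p$-subgroup is equivalent to its normality, since all Sylow $p$-subgroups are conjugate. Thus the negation of the conclusion of Theorem \ref{thm:overall} coincides with the hypothesis of the corollary. The negation of the hypothesis of Theorem \ref{thm:overall} --- ``there is no $g \in G$ with $n < Q\phi(\ord(g))$'' --- is exactly ``$n \geq Q\phi(\ord(g))$ for all $g \in G$'', which is the desired conclusion.

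With these logical equivalences recorded, the proof reduces to a single sentence invoking contraposition of Theorem \ref{thm:overall}. There is essentially no obstacle here, which is precisely why the authors introduce this as ``the contrapositive form'' of the preceding theorem. The only mild subtlety worth flagging in the writeup is the equivalence between normality and uniqueness of a Sylow $p$-subgroup, so that the reader is not tripped up by the rephrasing between ``there is a normal Sylow $p$-subgroup'' (as in Theorem \ref{thm:overall}) and ``there is only one Sylow $p$-subgroup'' (as in the corollary).
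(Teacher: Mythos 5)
Your proof is correct and is exactly the paper's approach: the authors introduce this corollary as ``the contrapositive form'' of Theorem \ref{thm:overall} and offer no further argument. Your careful matching of the negations (uniqueness versus normality of the Sylow $p$-subgroup, and the universal versus existential quantifier over $g$) is all that is needed.
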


The bound in Theorem \ref{thm:overall} is tight in the following sense.
In the alternating group $\mathbb{A}_4$, $n=12$,
$Q= 6$, and elements have order $3$, $2$, and $1$.
For $g\in \mathbb{A}_4$ with $\ord(g)= 3$, $\phi(\ord(g))= 2$.  
Thus $n=Q\phi(\ord(g))$.  However, $\mathbb{A}_4$ has four  Sylow $3$-subgroups, which happen to be cyclic.

\section{Proof of the main theorem}

To prove Theorem \ref{thm:main}, we need some facts about direct and semi-direct products.

\begin{lem}
\label{eq:prodphidirect} 
Let $U$ and $T$ be finite groups, and let $G = U\times T$ be the direct product of $U$ and $T$. Then $\phi(G) \leq \phi(U)\phi(T)$. Moreover, if 
$(|U| , |T|) = 1$, then $\phi(G) = \phi(U)\phi(T)$.
\end{lem}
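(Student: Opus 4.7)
The plan is to reduce the statement to a numerical inequality about $\phi$ and least common multiples, applied pointwise to pairs of elements.

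First I would use the standard identity $\ord((u,t)) = \mathrm{lcm}(\ord(u), \ord(t))$ for an element $(u,t) \in U \times T$. Then, by Definition \ref{def:phi(G)},
\[
\phi(G) = \sum_{(u,t) \in U \times T} \phi(\mathrm{lcm}(\ord(u), \ord(t))),
\qquad
\phi(U)\phi(T) = \sum_{(u,t) \in U \times T} \phi(\ord(u))\,\phi(\ord(t)),
\]
so it suffices to prove the pointwise claim that
\[
\phi(\mathrm{lcm}(a,b)) \leq \phi(a)\,\phi(b)
\]
for all positive integers $a$, $b$, with equality whenever $\gcd(a,b) = 1$.

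Next I would establish this numerical inequality by the prime-by-prime comparison afforded by multiplicativity of $\phi$ (cf.\ (\ref{eq:phi(n)primes})). For each prime $p$, writing $\alpha = v_p(a)$ and $\beta = v_p(b)$, the $p$-part of $\mathrm{lcm}(a,b)$ is $p^{\max(\alpha,\beta)}$. If $\alpha = 0$ or $\beta = 0$ the two sides of the local inequality coincide. Otherwise, assuming $\alpha \leq \beta$, the claim reduces to $p^{\beta-1}(p-1) \leq p^{\alpha+\beta-2}(p-1)^2$, i.e.\ $1 \leq p^{\alpha-1}(p-1)$, which holds trivially. Taking the product over primes yields $\phi(\mathrm{lcm}(a,b)) \leq \phi(a)\phi(b)$ and hence (after summing over $(u,t)$) the desired inequality $\phi(G) \leq \phi(U)\phi(T)$.

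For the ``moreover'' part, if $\gcd(|U|,|T|) = 1$, then since $\ord(u)$ divides $|U|$ and $\ord(t)$ divides $|T|$, we have $\gcd(\ord(u), \ord(t)) = 1$ for every $u \in U$ and $t \in T$. Thus $\mathrm{lcm}(\ord(u), \ord(t)) = \ord(u)\ord(t)$, and the usual multiplicativity of $\phi$ on coprime arguments gives $\phi(\ord(u)\ord(t)) = \phi(\ord(u))\phi(\ord(t))$, so equality holds term-by-term in the sum. There is no real obstacle here; the only thing to watch is the bookkeeping in the prime-by-prime comparison, but that is routine.
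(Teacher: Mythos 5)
Your proof is correct, and at the top level it follows the same strategy as the paper: both reduce the lemma to a pointwise numerical bound on $\phi(\ord((u,t)))$ in terms of $\phi(\ord(u))\phi(\ord(t))$ and then sum over all pairs, with the coprime case giving termwise equality. The difference lies in how the pointwise bound is justified. The paper writes $\ord((u,t))=\ord(u)\ord(t)/d$ with $d=(\ord(u),\ord(t))$ and asserts $\phi(\ord((u,t)))=\phi(\ord(u)/d)\,\phi(\ord(t))$ ``by multiplicativity,'' then bounds $\phi(\ord(u)/d)\leq\phi(\ord(u))$ via (\ref{eq:a|b=>phia|phib}); strictly speaking that intermediate equality requires $\ord(u)/d$ and $\ord(t)$ to be coprime, which can fail (e.g.\ $\ord(u)=4$, $\ord(t)=2$), so the paper's chain is slightly imprecise even though the final inequality it needs is true. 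Your prime-by-prime verification of $\phi(\mathrm{lcm}(a,b))\leq\phi(a)\phi(b)$ via (\ref{eq:phi(n)primes}) sidesteps this entirely and is airtight, at the cost of a little more local bookkeeping. So: same architecture, but your justification of the key local inequality is the more careful of the two.
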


\begin{proof} 
Given $g=(u,t)\in G$, 
    $\ord(g)=\ord(u)\ord(t)/(\ord(u),\ord(t))$.
Thus by the multiplicative property of the totient function and by (\ref{eq:a|b=>phia|phib})
\[ \phi(\ord(g))=\phi(\frac{\ord(u)}{(\ord(u),\ord(t))})\phi(\ord(t))
                       \leq \phi(\ord(u))\phi(\ord(t)).\]
Now
\begin{equation}
\label{eq:phidirprod}
\begin{array}{rcl}
\phi(G) &=& \displaystyle{
                     \sum_{u\in U} \sum_{t\in T} \phi(\ord{(u,t)}) 
               = \sum_{u\in U} 
                     \sum_{t\in T}\phi(\frac{\ord(u)}{(\ord(u),\ord(t))}) \phi(\ord(t))}\\
 &\leq& \displaystyle{
             \sum_{u\in U} \phi(\ord(u))\sum_{t\in T}\phi(\ord(t))=  \phi(U)\phi(T).}
 \end{array}
\end{equation}
Observe that if $(|U| , |T|) = 1$, then $(\ord(u),\ord(v))=1$ for all $u\in U$ and $t\in T$, so equality holds throughout.
\end{proof}

The condition $(|U| , |T|) = 1$ in Lemma \ref{eq:prodphidirect} can be replaced with other conditions to reach the same conclusion.  
If $U$ is an elementary abelian 2-group, then all elements of $U$ have order 1 or  2.  The totient of these numbers and their divisors is 1,
 so 
     $\phi(\ord(u))=\phi(\ord(u)/(\ord(u),\ord(t)))=1$ for all $u\in U$ and $t\in T$.  
Now (\ref{eq:phidirprod}) gives  $\phi(G) = \phi(U)\phi(T)$.  
Similarly, if $(|U| , |T|) = 2$ and $|U|$ is twice an odd number, then 
     $\phi(\ord(u))=\phi(\ord(u)/(\ord(u),\ord(t)))$, 
so $\phi(G) = \phi(U)\phi(T)$.

\begin{lem}
\cite[Lemma 5.3]{BCGRP:maxedge}
\label{lem:divideorder1}
Suppose that $G$ is a finite group and
that $G=U\rtimes_\varphi V$ is the  semidirect product of a normal
abelian subgroup $U$ and a subgroup $V$.   Assume $U$ and  $V$
have coprime orders. Then 
     $\ord_G(uv)|\ord_{U\times V}(uv)$
 for all $u\in U$ and $v \in V$.
\end{lem}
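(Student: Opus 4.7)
The plan is to reduce the divisibility $\ord_G(uv) \mid \ord_{U \times V}(uv)$ to a direct computation in $G$. Let $m = \ord(u)$ and $n = \ord(v)$. Because $|U|$ and $|V|$ are coprime, so are $m$ and $n$ by Lagrange's theorem, and therefore $\ord_{U \times V}(uv) = \mathrm{lcm}(m,n) = mn$. So it suffices to show $(uv)^{mn} = e$ in $G$.

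To compute $(uv)^k$ in $G$, I would exploit the normality of $U$: for each $w \in V$ the conjugate $\varphi_w(u) := wuw^{-1}$ lies in $U$. A short induction on $k$, using at each step that $U$ is abelian so factors coming from successive conjugations commute freely past one another, would yield the formula
\[
(uv)^k \;=\; \Bigl( \prod_{i=0}^{k-1} \varphi_{v^i}(u) \Bigr)\, v^k \qquad (k \geq 1).
\]
Setting $k = n$ gives $v^n = e_V$, and hence $(uv)^n = \prod_{i=0}^{n-1} \varphi_{v^i}(u) \in U$.

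To finish, I would raise this element of $U$ to the $m$th power. Since $U$ is abelian and each $\varphi_{v^i}$ restricts to a homomorphism of $U$,
\[
(uv)^{mn} \;=\; \bigl((uv)^n\bigr)^m \;=\; \prod_{i=0}^{n-1} \varphi_{v^i}(u)^m \;=\; \prod_{i=0}^{n-1} \varphi_{v^i}(u^m) \;=\; \prod_{i=0}^{n-1} \varphi_{v^i}(e) \;=\; e,
\]
giving $\ord_G(uv) \mid mn = \ord_{U \times V}(uv)$.

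The only step that requires real care is the inductive expansion of $(uv)^k$: it uses normality of $U$ to keep each conjugate inside $U$, and abelianness of $U$ to collect the conjugates into a single product with no interference. The coprimality of $|U|$ and $|V|$ enters only through the identification $\mathrm{lcm}(m,n)=mn$; the abelianness of $U$, by contrast, is essential to the whole argument, since in a non-abelian $U$ the expansion of $(uv)^k$ would not collapse into a product of conjugates of $u$.
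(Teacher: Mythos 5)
Your proof is correct. Note that the paper itself gives no proof of this lemma---it is imported by citation from the authors' earlier work \cite{BCGRP:maxedge}---so there is no in-paper argument to compare against; what you have supplied is a complete, self-contained, and elementary verification. The expansion $(uv)^k=\bigl(\prod_{i=0}^{k-1}\varphi_{v^i}(u)\bigr)v^k$ is the right engine: each step conjugates $u$ by a power of $v$ and normality keeps the result in $U$. One small refinement: this expansion actually needs only normality of $U$, not abelianness, since the conjugates can be collected in order without ever commuting them past each other; abelianness of $U$ is genuinely used only at the last step, where you distribute the $m$-th power over the product $\prod_{i=0}^{n-1}\varphi_{v^i}(u)$ to conclude that an ordered product of elements of order dividing $m$ again has order dividing $m$. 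Coprimality enters exactly where you say, in the identification $\ord_{U\times V}(uv)=\mathrm{lcm}(m,n)=mn$. The argument is sound as written.
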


\begin{cor}
\label{cor:sdp-div-dp}
With reference to Lemma \ref{lem:divideorder1},
     $\phi(\ord_G(uv))|\phi(\ord_{U\times V}(uv))$, and  
     $\phi(U\rtimes_\varphi V)\leq \phi(U\times V)$.
\end{cor}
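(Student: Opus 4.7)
The plan is to deduce both conclusions directly from Lemma \ref{lem:divideorder1}, the totient divisibility property (\ref{eq:a|b=>phia|phib}), and the set-theoretic bijection between $U \rtimes_\varphi V$ and $U \times V$.

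First I would settle the divisibility claim. Lemma \ref{lem:divideorder1} provides $\ord_G(uv) \mid \ord_{U\times V}(uv)$ for every $u\in U$ and $v\in V$, and feeding this into (\ref{eq:a|b=>phia|phib}) immediately yields
\[ \phi(\ord_G(uv)) \,\big|\, \phi(\ord_{U\times V}(uv)). \]
Since both sides are positive integers, this divisibility upgrades automatically to the numerical inequality $\phi(\ord_G(uv)) \leq \phi(\ord_{U\times V}(uv))$, which is what is needed to pass to the sum.

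Next I would lift this pointwise inequality to the sum inequality. Because $U \cap V = \{1\}$ in a semidirect product, every element of $G = U\rtimes_\varphi V$ has a unique expression as $uv$ with $u\in U$ and $v\in V$, so the assignment $(u,v)\mapsto uv$ is a set-theoretic bijection from $U\times V$ onto $G$. Summing the previous inequality term by term over all such pairs yields
\[ \phi(U\rtimes_\varphi V) = \sum_{u\in U}\sum_{v\in V} \phi(\ord_G(uv)) \leq \sum_{u\in U}\sum_{v\in V} \phi(\ord_{U\times V}(uv)) = \phi(U\times V), \]
which is the second conclusion.

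I do not anticipate any substantive obstacle here: the real work has already been done in Lemma \ref{lem:divideorder1}, the totient divisibility step (\ref{eq:a|b=>phia|phib}) is elementary, and the bijection between the underlying sets is built into the definition of the semidirect product. The only mild care point is to write $\phi(G)$ as a double sum indexed by the pairs $(u,v)$ rather than by the group elements themselves, so that the pointwise comparison can be applied term by term.
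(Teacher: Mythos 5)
Your proposal is correct and follows essentially the same route as the paper's (much terser) proof: Lemma \ref{lem:divideorder1} combined with (\ref{eq:a|b=>phia|phib}) gives the divisibility, and summing over the common underlying set gives the inequality via (\ref{eq:newsumgroup}). Your explicit remark that the pairs $(u,v)$ index both groups bijectively is a worthwhile detail the paper leaves implicit, but it is not a different argument.
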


\begin{proof}
The divisibility follows from Lemma \ref{lem:divideorder1} and (\ref{eq:a|b=>phia|phib}), and the inequality follows from (\ref{eq:newsumgroup}).
\end{proof}

\begin{thm}
\cite[Theorem 10.30]{r} 
(The Schur-Zassenhaus theorem)
\label{thm:schurzass} 
Let $G$ be a finite group, and let $K$ be a normal subgroup of  $G$ with $(|K|, |G:K|) = 1$. Then $G$ is a semidirect product of $K$ and $G/K$.  In particular, there exists a subgroup $H$ of $G$ with order $|G : K|$ such that $G = K\rtimes_\varphi H$  for some homomorphism $\varphi:H\rightarrow \mathrm{Aut}(K)$.
\end{thm}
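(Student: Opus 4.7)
The plan is to establish the existence of a complement $H$ to $K$ in $G$ by induction on $|G|$, with $|G|=1$ trivial. Three structural cases arise: two are routine reductions, and the third (abelian $K$) is where the coprimality hypothesis does the real work.

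\textbf{Case 1.} Suppose $K$ contains a proper nontrivial subgroup $L$ that is normal in $G$. Apply the inductive hypothesis to $G/L$ with normal subgroup $K/L$ (whose index $|G:K|$ is still coprime to $|K/L|$) to obtain a complement $M/L$ with $|M|=|L|\cdot |G:K|$. Then $L$ is normal in $M$ with $(|L|,|M:L|)=1$, so a second application of the hypothesis inside the smaller group $M$ produces a complement $H$ of $L$ in $M$; one checks $|H|=|G:K|$ and $H\cap K=1$, so $H$ complements $K$ in $G$.

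\textbf{Case 2.} Suppose no such $L$ exists. Pick a prime $p\mid |K|$ and a Sylow $p$-subgroup $P$ of $K$. The Frattini argument gives $G=K\cdot N_G(P)$, with $N_G(P)/N_K(P)\cong G/K$. If $N_G(P)\neq G$, induction applied inside $N_G(P)$ yields a complement of $N_K(P)$ that also serves as a complement of $K$ in $G$. Otherwise $P$ is normal in $G$, and the Case~1 hypothesis forces $P=K$, so $K$ is a $p$-group. Its center $Z(K)$ is characteristic in $K$ (hence normal in $G$) and nontrivial, so the same hypothesis forces $Z(K)=K$, leaving us with $K$ abelian.

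\textbf{Case 3 (abelian $K$).} Choose a set-theoretic section $s\colon G/K\to G$ and define the $2$-cocycle $f(x,y)=s(x)s(y)s(xy)^{-1}\in K$. The standard averaging trick --- summing the cocycle identity over $z\in G/K$ and exploiting that $K$ is abelian --- shows that $f^{n}$ is a coboundary, where $n=|G:K|$ and the averaged map $c(x)=\prod_{y}f(x,y)$ plays the role of a primitive. Since $\gcd(n,|K|)=1$, $n$ is invertible in $K$, so $f$ itself is a coboundary; adjusting $s$ by it produces a homomorphic section whose image is the required complement $H$. This inversion-by-$n$ step is the crux of the proof and the only place the coprimality hypothesis is used essentially; everything else is soft manipulation of normal subgroups via induction, Sylow theory, and the Frattini argument.
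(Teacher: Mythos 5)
The paper does not prove this statement; it is quoted verbatim as a classical result with a citation to Rose \cite[Theorem 10.30]{r}, so there is no in-paper argument to compare against. Your proof is the standard existence proof of the Schur--Zassenhaus theorem and is correct: the induction on $|G|$ with the two reductions (a proper nontrivial $G$-normal subgroup of $K$, then the Frattini argument on a Sylow $p$-subgroup of $K$) correctly whittles the problem down to an abelian kernel, and the averaging argument showing that the $2$-cocycle $f$ satisfies $f^{\,n}=\partial c$ with $n=|G:K|$ invertible modulo the exponent of $K$ is exactly the right use of the coprimality hypothesis. Two small points worth making explicit if you write this up fully: in Case 1 you should note that $M$ is proper in $G$ because $L$ is proper in $K$ (so the second inductive application is legitimate), and in Case 3 the passage from ``$f^{\,n}$ is a coboundary'' to ``$f$ is a coboundary'' uses that the $n$-th power map is an automorphism of the abelian group $K$ commuting with the $G/K$-action, so one may take $n$-th roots of the primitive $c$. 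Neither is a gap, just bookkeeping. This is essentially the proof found in Rose and other standard texts, which is presumably why the authors simply cite it.
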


Before treating the general case we present a special case involving  cyclic groups.

\begin{lem}
\label{lem:semivsdircyclics}
Let $a$ and $b$ be coprime positive integers.  Then
 $\phi(\cyc_a\rtimes_\varphi \cyc_b) < \phi(\cyc_a\times \cyc_b)$,
with equality if and only if the semi-direct product is direct.
\end{lem}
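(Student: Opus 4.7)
The plan is to upgrade the termwise divisibility $\phi(\ord_G(uv))\mid \phi(\ord_{U\times V}(uv))$ behind Corollary \ref{cor:sdp-div-dp} to a strict inequality at one specific pair, assuming $\varphi$ is non-trivial. Writing $G=\cyc_a\rtimes_\varphi \cyc_b$, if $\varphi$ is trivial then $G=\cyc_a\times\cyc_b$ and equality holds, so I need only find, when $\varphi$ is non-trivial, a single pair $(u_*,v_*)$ with $\phi(\ord_G(u_*v_*))<\phi(\ord_{U\times V}(u_*v_*))$: by the termwise divisibility, any one strict step survives summation.

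The natural choice is to let $u_*$ and $v_*$ generate $U=\cyc_a$ and $V=\cyc_b$, respectively, so that $\ord_{U\times V}(u_*v_*)=\mathrm{lcm}(a,b)=ab$. Since $\varphi$ is non-trivial, $G$ is non-abelian and hence non-cyclic, so $\ord_G(u_*v_*)$ is a proper divisor of $ab$. I would then invoke the standard arithmetic fact that for $m\mid n$ with $m<n$ one has $\phi(m)=\phi(n)$ if and only if $n=2m$ with $m$ odd. Consequently, the pair $(u_*,v_*)$ supplies strict totient inequality unless one lands in the residual ``$\phi$-trap'' $\ord_G(u_*v_*)=ab/2$ with $ab/2$ odd.

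Excluding this trap is the main step. The projection $G\to V$ sends $u_*v_*\mapsto v_*$, so $\ord_G(u_*v_*)$ is a multiple of $b$; if $b$ is even, then $\ord_G(u_*v_*)$ is even and cannot equal the odd number $ab/2$. If $b$ is odd, then $ab/2$ odd forces $a=2a'$ with $a'$ odd, hence $ab/2=a'b$. I would use the standard computation
\[
  (u_*v_*)^k \;=\; u_*\,(1+s+s^2+\cdots +s^{k-1})\,v_*^k
\]
in $U=\mathbb{Z}/a$, with $s=\varphi(v_*)$ acting by multiplication, and reduce modulo $2$. Since $s$ is a unit in $\mathbb{Z}/2a'$, $s$ is odd, so every $s^i\equiv 1\pmod 2$, and therefore the bracketed sum at $k=a'b$ is congruent to $a'b\equiv 1\pmod 2$; moreover $u_*$ is itself odd as a generator of $\cyc_{2a'}$. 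Hence $(u_*v_*)^{a'b}\equiv 1\not\equiv 0\pmod 2$ in $U$, so $(u_*v_*)^{a'b}\ne e$ in $G$, giving $\ord_G(u_*v_*)\ne a'b=ab/2$. This avoids the trap, so $\phi(\ord_G(u_*v_*))<\phi(ab)$, and strict inequality propagates to the total.
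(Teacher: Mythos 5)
Your proposal is correct, but it resolves the crucial step by a genuinely different route from the paper. Both arguments begin the same way: Corollary \ref{cor:sdp-div-dp} gives the termwise divisibility $\phi(\ord_G(g))\mid\phi(\ord_{U\times V}(g))$, so equality of the sums forces termwise equality, and the element to examine is a generator of $\cyc_a\times\cyc_b$, whose order in $G$ must then be a proper divisor $m$ of $ab$ with $\phi(m)=\phi(ab)$, hence $m=ab/2$ odd (the same standard fact about $\phi(m)=\phi(n)$ for proper divisors that you cite is used implicitly in the paper when it concludes $\ord_G(h)=n/2$ is odd). At that point the paper turns structural: it takes $L=\langle h\rangle$ of index $2$, hence normal, applies the Schur--Zassenhaus theorem (Theorem \ref{thm:schurzass}) to rewrite $G\cong\cyc_{ab/2}\rtimes_\psi\cyc_2$, and observes that every element outside $L$ has even order properly dividing $ab$, hence totient strictly below $\phi(ab)$, contradicting termwise equality. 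You instead kill the exceptional case head-on with the semidirect-product power formula $(u_*v_*)^k=\bigl(\textstyle\sum_{i=0}^{k-1}s^i\bigr)u_*\cdot v_*^k$ and a parity count: when $b$ is even the projection onto $V$ already forces $\ord_G(u_*v_*)$ to be even, and when $b$ is odd and $a=2a'$ with $a'$ odd, the facts that $s$ and $u_*$ are units (hence odd) modulo $2a'$ and that $a'b$ is odd make the $U$-component of $(u_*v_*)^{a'b}$ odd, so $(u_*v_*)^{ab/2}\neq e$. Your version is more elementary and self-contained, avoiding a second appeal to Schur--Zassenhaus and producing one explicit witness element at which the totient drops strictly (and the parity argument is robust to the choice of convention for the action); the paper's version reuses the structural machinery already in place and needs no computation inside the semidirect product. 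Both are sound proofs of the same statement.
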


\begin{proof}
Note that 
     $G=\cyc_a\rtimes_\varphi \cyc_b$ and 
     $H=\cyc_a\times \cyc_b\cong \cyc_{ab}$
are defined on the cartesian product of the underlying sets of 
     $\cyc_a$ and $\cyc_b$.  
Let $n=ab$.     
By Corollary \ref{cor:sdp-div-dp}, 
   $\phi(\ord_G(g)) |  \phi(\ord_H(g))$ for all $g\in G$.
Thus 
 $\sum_{g\in G} \phi(\ord_G(g))
                 \leq  \sum_{g\in G} \phi(\ord_H(g))$.
Moreover, equality holds if and only if 
   $\phi(\ord_G(g))= \phi(\ord_H(g))$ for all $g\in G$
 
Suppose equality holds for the sums.  Pick a generator $h$ of $H$.  We are done if $\ord_G(h)=n$ since $G\cong\cyc_{n}\cong H$ in this case. Suppose for the sake of contradiction that $\ord_G(h)\neq n$.  Now $\ord_G(h)|n$ by  (\ref{eq:a|b=>phia|phib}), so in light of (\ref{eq:phi(n)primes}), 
     $m=\ord_G(h) = n/2$ is odd, as.
 Let $L=\langle h\rangle\subset G$, so $|L|$ is odd and $|G:H|=2$.  This implies $L\lhd G$.   Now by Theorem \ref{thm:schurzass}, there is a subgroup $K$ of $G$ with order 2 such that $G=L\rtimes_\psi K$.  Hence $G$ is isomorphic to the semi-direct product $\cyc_m\rtimes_\psi \cyc_2$.        
Since $\cyc_m$ is normal in $G$, we have that
    $(uv)^2 \in\cyc_m$   for all $u\in \cyc_m$, $v\in \cyc_2$.   
In particular, $\ord_G(uv)$ is even. However, 
    $\ord_G(uv)\not=2m$ 
since $G$ is not cyclic.  Now  
     $\phi(\ord_G(uv))<\phi(2m)=\phi(n)$,
since $\ord(u)|m$.     
This implies
   $\phi(G) < \phi(\cyc_n)$, contrary to our assumption.
Thus $G$ is cyclic as required.   
\end{proof}

We are ready to prove our main result, namely that  $\phi(\cyc_n)\geq \phi(G)$, with equality if and only if $G$ is isomorphic to $\cyc_n$.

\begin{proof}[Proof of Theorem \ref{thm:main-restated}]
Suppose 
       $\phi(G)\geq \phi(\cyc_n)$.
For some $g\in G$, $\phi(\ord(g))$ is at least the average value over the group, so 
    $\phi(\ord(g)) \geq \phi(G)/n\geq    \phi(\cyc_n)/n> n/Q$
by (\ref{eq:phiC>n^2/Q}).

We proceed by induction on the number of distinct prime factors of $n$.    
If $|G|$ has just one prime factor, then  $G$ is cyclic by Lemma \ref{lem:N<Qphi-primpow}, and hence isomorphic to $\cyc_n$.       
Now assume that for  all $n'$ with fewer distinct prime factors than $n$
and groups $G'$ of order $n'$,
      $\phi(\cyc_{n'}) \geq \phi(G')$,
with equality if and only if $G'$ is isomorphic to $\cyc_{n'}$.  

By Theorem \ref{thm:overall}, there exists a Sylow $p$-subgroup $P$ of $G$ which is both cyclic and normal, where $p$ is the largest prime divisor of $n$. Since $P$ is a Sylow $p$-subgroup,  $|G:P|$ is coprime to $|P|$.  Abbreviate $a=|P|$, $b=|G:P|$.  By Theorem \ref{thm:schurzass},
        $G=P\rtimes_\varphi T$
for some subgroup $T\subseteq G$ with order $b$ and some homomorphism $\varphi:T\rightarrow\mathrm{Aut}(P)$.  

Since $P$ is cyclic, Corollary \ref{cor:sdp-div-dp} gives that
         $\phi(G)=\phi( P\rtimes_\varphi T) \leq \phi(P\times T)$.
But by Lemma \ref{eq:prodphidirect},  
         $\phi(P\times T)= \phi(P)\phi(T)$.         
Identify $\cyc_n$ with the direct product of cyclic subgroups 
          $\cyc_a\times \cyc_b$.          
Observe that
        $\phi(\cyc_n)=\phi(\cyc_a)\phi(\cyc_b)$ 
by Lemma \ref{eq:prodphidirect} and 
        $\phi(\cyc_a)=\phi(P)$ 
since both are cyclic and of the same order.

Note that $p\nmid |T|=b$ by construction and $|T||n$ by Lagrange's theorem, so $|T|$ has fewer distinct prime divisors than $n$ and $|T|<n$. By the inductive hypothesis 
       $\phi(\cyc_b)\geq \phi(T)$,
with equality if and only if $T$ is cyclic.  Thus 
        $\phi(G)\leq \phi(\cyc_n)$,
with equality only if $T$ is cyclic.  
By assumption $\phi(G)\geq \phi(\cyc_n)$, hence,
        $\phi(G)= \phi(\cyc_n)$ and 
        $T$ is cyclic of order $b$.
Thus $G$ is isomorphic to $\cyc_a\rtimes_\varphi \cyc_b$.  
The result follows by Lemma \ref{lem:semivsdircyclics}.  
\end{proof}

\begin{proof}[proof of Theorem \ref{thm:main-restated}]
Straightforward from  Theorem \ref{thm:main-restated} and (\ref{eq:biEcount}).
\end{proof}

Theorem \ref{thm:main-restated} implies that 
 $\cyc_n$ is determined up to isomorphism by 
$\phi(\cyc_n)$.  However, $\phi(G)$ depends only upon the orders of its elements, and does not determine $G$ in general. 
Indeed,  
      $\phi(\cyc_4 \times \cyc_4)=\phi(\cyc_2 \times Q)=28$, 
 where $Q$ is the quaternion group, since each has three elements of order $2$ and twelve of order $4$.    We pose a related question. 
Let G and H are finite groups of the same order with 
       $\phi(G) = \phi(H)$. 
Suppose G be simple. Is H necessarily simple?


\end{document}